\documentclass[10pt]{article}%
\usepackage{float} 
\usepackage{fullpage}
\usepackage{amsfonts}
\usepackage{amsmath}
\usepackage{amssymb}
\usepackage{amsbsy}
\usepackage{amsthm}
\usepackage{mathtools}
\usepackage{epsfig}
\usepackage{graphicx}
\usepackage{colordvi}
\usepackage{graphics}
\usepackage{xcolor}
\usepackage{bm}
\usepackage{verbatim} 
\usepackage{cite}
\usepackage[normalem]{ulem}

\numberwithin{equation}{section}

\usepackage{hyperref}
\usepackage{comment}

\usepackage[numbers,sort]{natbib}

\newtheorem{theorem}{Theorem}[section]
\newtheorem{thm}{Theorem}[section]

\newtheorem{lemma}{Lemma}[section]

\newtheorem{algorithm}[thm]{Algorithm}

\newcommand{\norm}[1]{\ensuremath{\left\|{#1}\right\|}}
\newcommand{\inv}{^{-1}}
\newcommand{\ohm}{ \Omega }
\newcommand{\Grad}{\ensuremath{\nabla}}
\def\al#1\eal{\begin{align}#1\end{align}}
\def\als#1\eals{\begin{align*}#1\end{align*}}
\newcommand{\pare}[1]{\left({}#1\right)}

\newcommand{\tv}{\ensuremath{\tilde{v}}}
\newcommand{\te}{\ensuremath{\tilde{e}}}

\def\div{\nabla \cdot}

\begin{document}

\title{Continuous data assimilation in steady Navier-Stokes equations with unknown viscosity: robust and efficient solvers and fast parameter recovery}

\author{
Leo G. Rebholz\thanks{\small School of Mathematical and Statistical Sciences, Clemson University, Clemson, SC, 29364.  Partially supported by Department of Energy grant DE-SC0025292  (rebholz@clemson.edu)}
\and
Jorge Reyes\thanks{\small Department of Mathematics, Texas State, San Marcos, TX 78666, USA. (jreyes@txstate.edu)}
\and
Jared P. Whitehead\thanks{\small Department of Mathematics, Brigham Young University, Provo, Utah 84602, USA.   Partially supported by NSF grants DMS-2510495 and CCF-2343286 (whitehead@mathematics.byu.edu)}
}
\maketitle

\begin{abstract}
Recent advances in equation discovery methods such as SINDy
have highlighted the growing interest in identifying governing parameters and models directly from data. In this work, we take a complementary approach grounded in analysis and numerical PDE methods: we recover an unknown viscosity in steady Navier-Stokes equations (NSE) from partial incompressible flow observations using continuous data assimilation (CDA).  We propose a simple and efficient parameter recovery algorithm and also a nonlinear solver for CDA-NSE.  Together, this creates a highly efficient technique for recovering an unknown viscosity from partial solution data.  Our analysis establishes the well-posedness of steady CDA-NSE, quadratic convergence of the parameter recovery algorithm, and quadratic convergence of a CDA-Picard + CDA-Newton nonlinear solver.  Numerical experiments illustrate that the methods are very effective in restoring parameters quickly, even with poor initial guesses.
\end{abstract}

\section{Introduction}

In this paper, we consider efficient parameter recovery algorithms for the steady Navier-Stokes equations (NSE)  that use known partial solution data together with continuous data assimilation (CDA) to recover an unknown viscosity.  The steady NSE take the following form, for velocity $u$ and pressure $p$, on a domain $\Omega\subset\mathbb{R}^d$, d=2 or 3:
\begin{align}
- \nu \Delta u + u\cdot\nabla u + \nabla p  & = f,  \label{nse1} \\
\nabla \cdot u&=0, \label{nse2}
\end{align}
together with appropriate boundary conditions, where $\nu$ is the kinematic viscosity, $f$ is an external forcing such as gravity or bouyancy.  This system models many types of flows across the spectrum of science and engineering \cite{Layton08,G94,John16}, and thus recovering unknown viscosity parameters from partial solution data or observations is of widespread interest.

We consider the setting where $\nu$ is unknown.  However, partial solution data is known, meaning we can access $I_H u$ where $I_H$ is an interpolant with characteristic point spacing $H$.  This leads to the following CDA-NSE system, where $v$ and $q$ are unknowns, $\hat\nu$ is a known approximation (perhaps a guess) of $\nu$:
\begin{align}
-\hat \nu \Delta v + v\cdot\nabla v + \nabla q  + \mu I_H (v-u)& = f,  \label{cdanse1} \\
\nabla \cdot v&=0, \label{cdanse2}
\end{align}
where $v$ satisfies the same boundary conditions as $u$.  Note that if $\hat\nu=\nu$, then solving \eqref{cdanse1}-\eqref{cdanse2} recovers $u$.  For simplicity of analysis, we will consider enclosed flows with no-slip boundary conditions, i.e.
\[
u |_{\partial\Omega} = v |_{\partial\Omega} =0,
\]
but our analysis is extendable to nonhomogeneous mixed Dirichlet-Neumann boundary conditions.

Herein we propose two new ideas that combine to create a highly efficient method that recovers the true viscosity $\nu$ from partial solution data $I_H u$.  The first idea is based on our analysis that takes advantage of CDA to show 1) the quantity
\[
E(\hat\nu) = \frac12 \| I_H (u-v(\hat\nu) )\|^2 \sim |\nu-\hat\nu|^2, 
\]
which implies that the $E(\hat\nu)$ is quadratic with vertex at $\hat\nu=\nu$; and 2) $E'(\hat\nu)$ can be easily calculated.  Hence a (modified) Newton's method can be constructed to find the root $\nu$ of $E$.  This algorithm takes the form
\[
\nu_{k+1} = \nu_k - \frac{2 E(\nu_k)}{E'(\nu_k)},
\]
noting the modification with the 2 is used because we show $E$ is parabolic and $E'(\nu)=0$.  Calculating $E(\nu_k)$ and $E'(\nu_k)$ requires solving \eqref{cdanse1}-\eqref{cdanse2}, and the second novel idea of this paper is to efficiently solve this system by extending the Picard+Newton iteration methodology of \cite{PRTX25} to CDA-Picard+CDA-Newton.  Picard+Newton is proven in \cite{PRTX25} to be both unconditionally stable and quadratically convergent, and very robust with respect to the viscosity (far more robust than either Picard or Newton alone).  CDA is incorporated into Picard and Newton following the recent work in \cite{LHRV23,GLNR24}, and we prove herein that CDA-Picard+CDA-Newton maintains both unconditional stability and quadratic convergence but is even faster and more robust with respect to the viscosity and initial guess due to its incorporation of data.  Hence the combination of these two key new ideas yields a method that can solve  \eqref{cdanse1}-\eqref{cdanse2} quickly, and with just a few solves of  \eqref{cdanse1}-\eqref{cdanse2} we can recover the unknown viscosity $\nu$.

To our knowledge, this is the first work where CDA is used to recover unknown parameters in a steady system.  CDA was originally developed in 2014 to help time dependent simulations remain accurate over longer times \cite{AOT_2014,AT14} and more precisely, for many dissipative systems it can be proven that with enough partial solution data the computed solution converges to the true solution exponentially fast in time, see e.g. \cite{CDA_5_2015,LP24,LRZ19,ZRSI19,GN20}.  Since these original papers of Azouani et al, CDA has caught fire in the applied mathematics and scientific computing communities due its strong mathematical foundation and that it allows for rigorous proofs of accuracy.  It has been used on a wide variety of dissipative PDEs with physical applications e.g. \cite{FLT16,CDA_5_2015,FLT19,Biswas_Hudson_Larios_Pei_2017,LV24,CL21}, and many improvements have been made for its implementation e.g. \cite{HRV24,RZ21,CFLS25}.  In particular, an efficient and effective approach to the data assimilation parameter recovery problem based on CDA was recently discovered by Carlson, Hudson and Larios in \cite{CHL_2020} for the time dependent NSE.  They showed one can exploit scalings of parameters with the interpolant of the error (which is known due to known partial solution data) to quickly recover the Reynolds number on the fly in time dependent incompressible flow simulations using the NSE, with negligible extra cost.  This breakthrough idea was further developed and generalized in \cite{M22,Lorenz_2021,FLMW24,newey2025model,ABH26} to different systems and recovering various parameters or an unknown forcing, and in \cite{BB26,L23} to sharpen regularity and uniqueness properties of systems.

The first use of CDA for steady problems was the 2023 paper \cite{LHRV23}, where the connection between nudging in time stepping and nudging at nonlinear iterations was exploited to show that CDA can accelerate and even enable convergence of a nonlinear solver.  They showed that with enough partial solution data, the Picard solver for the Navier-Stokes equations converges linearly to the true solution, even for arbitrarily bad initial guesses and arbitrarily large Reynolds numbers.  Moreover, the more partial solution data one has the faster the convergence, and in the setting where multiple NSE solutions exists the methods will converge to the solution from which the partial solution data was obtained.  For Newton, the paper showed that the convergence basin radius grows as more partial solution data is known.  CDA has since been studied for nonlinear solvers of the steady Boussinesq equations \cite{H25}, with noisy data \cite{GLNR24}, using efficient splitting method implementations \cite{FRV26,FFR26}, and for the p-Laplace equation \cite{M26}.  Herein, we tackle a natural next step of CDA for steady problems by developing an efficient method for using CDA with partial solution data to recover an unknown viscosity for the NSE.  While we focus on the steady NSE, the ideas we present are likely adaptable to a wide range of steady problems.

The paper is arranged as follows: In Section \ref{sec: Prelim} we go over notation and necessary mathematical preliminaries. Section \ref{sec: main} introduces the CDA-NSE system under inconsistent viscosity, and proves mathematical properties for it including consistency error and well-posedness.  In Section \ref{sec: Analysis}, we propose the CDA-Picard+CDA-Newton solver for the CDA-NSE system under inconsistent viscosity, and prove that provided enough partial solution data the new solver is stable and quadratically  convergent.  In Section \ref{sec:paramrec}, we derive the modified Newton viscosity recovery algorithm.  Section \ref{sec: Numres} gives numerical results for several benchmark tests that show the efficiency and effectiveness of both the CDA-Picard+CDA-Newton solver and the modified Newton viscosity recovery algorithm.
We conclude our work in Section \ref{sec: Con} and offer multiple directions for future work.

\section{Notation and Preliminaries} \label{sec: Prelim}
 The solutions are sought in the following functional spaces:
\begin{align*}
    X &:= (H_0^1(\Omega))^d= \left\lbrace v \in L^2(\Omega)^d,\  \nabla v \in L^2(\Omega)^{d\times d}:  v|_{\partial\Omega} = 0 \; \right\rbrace\\
Q &:= L_0^2(\Omega) = \left\lbrace p \in L^2(\Omega) \ : \ \int_\Omega \! p \ \mathrm{d}x  = 0 \right\rbrace,
\end{align*}
where the domain $\Omega$ is an open, connected, bounded set with either a $C^2$ smooth boundary, or is a convex polygon/polyhedron.  

The $L^2(\Omega)$ norm and inner product are denoted by $\norm{\cdot }$ and $( \cdot, \cdot)$ respectively. Furthermore, the dual norm of $ X$, $ H\inv(\Omega)$, will be denoted by $ \norm{\cdot}_{-1}$ and the notation $\langle \cdot , \cdot \rangle$ is used for the dual pairing of $X$ and $X'$.  Norms in the Sobolov spaces $H^k{(\Omega)}$ are written $\norm{ \cdot }_k$, and, likewise, semi-norms are given by $| \cdot |_k$.

The space of weakly divergence-free functions in $X$ are given by
\begin{equation*}
V = \lbrace v \in X : (q, \nabla \cdot v) = 0, \forall q \in Q \rbrace.
\end{equation*}
Observe that since $\nabla \cdot v \in Q$, we can rewrite $V=\{ v\in X,\ \| \nabla \cdot v\|=0\}.$

We recall that the Poincaré inequality holds on $X$: there exists a constant $C_P$, depending only on the domain, such that $ \forall \phi \in X$,
$
\|\phi\| \leq C_P \|\nabla \phi\|.
$
This shows that the $H^1$ norm is equivalent to the $L^2$ norm of the gradient in our setting, and we will use the latter as the norm for both $X$ and $V$.

For $ u,v,w \in X $, we define the trilinear form $ b$ as follows: 
\begin{eqnarray}
 b(u,v,w) = (u\cdot\nabla v,w). \label{eq: B_term}
 \end{eqnarray}
We note that $ \eqref{eq: B_term}$ as well as other formulations such as skew-symmetric, EMAC or rotational form are all equivalent in the continuous setting, as well as discretely if strongly divergence free elements are used in a finite element discretization (e.g. Scott-Vogelius). However, when using only weakly divergence free elements (e.g. Taylor-Hood, MINI) the different forms exhibit different conservation behavior~\cite{CHOR17}, and $b$ is typically adjusted through a skew-symmetrization.

We also recall the following lemmas, which will be used in our analysis.
\begin{lemma}[\cite{Layton08,temam}]\label{TRIL}
For $u, v, w \in X$, there exists a constant $M$ that depends only on the domain $ \ohm$ such that
\al
 b(u, v, w) &\leq   M \norm{u}^{\frac{1}{2}} \norm{\Grad u}^{\frac{1}{2}} \norm{\Grad v}  \norm{\Grad w},\label{eq: bbound1} \\
 b(u, v, w) &\leq  M \norm{\Grad u} \norm{ \Grad v}  \norm{\Grad w},\label{eq: bbound2} \\
 b(u,v,w) &\leq M \norm{\Grad u} \norm{\Grad v} \norm{w}^{1/2} \norm{\Grad w}^{1/2}.\label{eq: bbound3}
\eal
\end{lemma}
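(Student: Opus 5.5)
Each of the three bounds in Lemma \ref{TRIL} comes from the same two ingredients: Hölder's inequality with three exponents on $b(u,v,w)=(u\cdot\Grad v,w)$, and Sobolev/Ladyzhenskaya-type embeddings for functions in $X=(H_0^1(\Omega))^d$ with $d\le 3$. Since $\Grad v$ enters each bound only through $\norm{\Grad v}$, I will always put $\Grad v$ in $L^2$. The remaining factor pairs $(u,w)$ then go in $L^p\times L^q$ with $1/p+1/q=1/2$. The work is in choosing $(p,q)$ to match each bound.

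\textbf{Proof of \eqref{eq: bbound2}.} I take $p=q=4$:
\[
|b(u,v,w)|\le \norm{u}_{L^4}\norm{\Grad v}\norm{w}_{L^4}.
\]
For $d\le 3$ the embedding $H_0^1\hookrightarrow L^4$ holds, and together with Poincaré it gives $\norm{\phi}_{L^4}\le C\norm{\Grad\phi}$. Applying this to $u$ and $w$ gives the bound.

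\textbf{Proofs of \eqref{eq: bbound1} and \eqref{eq: bbound3}.} These need the sharper interpolation inequality
\[
\norm{\phi}_{L^3}\le C\norm{\phi}^{1/2}\norm{\phi}_{L^6}^{1/2}.
\]
\begin{itemize}
\item First, Hölder's inequality interpolates $L^3$ between $L^2$ and $L^6$ as above.
\item Second, $H_0^1\hookrightarrow L^6$ for $d\le3$, so $\norm{\phi}_{L^6}\le C\norm{\Grad\phi}$.
\item Combining the two gives $\norm{\phi}_{L^3}\le C\norm{\phi}^{1/2}\norm{\Grad \phi}^{1/2}$.
\end{itemize}
For \eqref{eq: bbound3}, I use Hölder with $(L^6,L^2,L^3)$:
\[
|b(u,v,w)|\le \norm{u}_{L^6}\norm{\Grad v}\norm{w}_{L^3}\le M\norm{\Grad u}\norm{\Grad v}\norm{w}^{1/2}\norm{\Grad w}^{1/2}.
\]
For \eqref{eq: bbound1}, I swap the roles and take $(L^3,L^2,L^6)$:
\[
|b(u,v,w)|\le \norm{u}_{L^3}\norm{\Grad v}\norm{w}_{L^6},
\]
and then apply the same two embeddings to $u$ and $w$.

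\textbf{Constants and dimension.} The constants $C$ depend only on $\Omega$ through the Sobolev and Poincaré constants. I will take $M$ to be the maximum of the three resulting constants.

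The only delicate point is uniformity in dimension. In $d=2$ the embeddings are even better: $H_0^1\hookrightarrow L^p$ for all $p<\infty$, and Ladyzhenskaya's inequality $\norm{\phi}_{L^4}\le C\norm{\phi}^{1/2}\norm{\Grad\phi}^{1/2}$ holds. So the $d=3$ exponents above remain valid in $d=2$, and the same argument covers both cases.

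Boundary regularity is not needed. Functions in $H_0^1$ extend by zero to $\mathbb{R}^d$, so the whole-space Gagliardo–Nirenberg–Sobolev inequalities apply directly. These are classical results, and in the paper I would cite \cite{Layton08,temam} rather than reprove them.
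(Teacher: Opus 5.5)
Your proof is correct and is the standard H\"older--Sobolev--interpolation argument; the paper gives no proof of its own and simply cites \cite{Layton08,temam}, where essentially this argument appears. One small precision: in $d=2$ the bound $\norm{\phi}_{L^6}\le C\norm{\Grad\phi}$ does not follow from a whole-space inequality alone (scaling shows it fails on $\mathbb{R}^2$). It instead comes from the whole-space Gagliardo--Nirenberg bound $\norm{\phi}_{L^6}\le C\norm{\phi}^{1/3}\norm{\Grad\phi}^{2/3}$ combined with Poincar\'e on the bounded domain $\Omega$, which your constant bookkeeping already accommodates.
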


The weak form of the steady state NSE \eqref{nse1}-\eqref{nse2} is given by: find $ \pare{u,p} \in X \times Q$ such that $ \forall \chi \in X$ and $ \forall q \in Q$ 
\al
\nu \pare{ \Grad u, \Grad \chi } + b(u,u,\chi) + (p, \div \chi) &= \langle f,\chi \rangle, \label{eq: weak_NSE1} \\
\pare{\div u, q} &= 0 \label{eq: weak_NSE2}.
\eal
Furthermore, since the pair $(X,Q)$ satisfy the inf-sup condition, we can consider the following equivalent formulation: Find $ u \in V$ such that $ \forall \chi \in V$
\al
\nu \pare{ \Grad u, \Grad \chi } + b(u,u,\chi)  &= \langle f,\chi \rangle. \label{eq: weak_NSE3} 
\eal

\begin{lemma}[\cite{Layton08}]
Let $ \alpha_0 = M \nu^{-2}\norm{f}_{-1}$. For any $ f \in H\inv(\ohm)$ and $ \nu >0$, there exists at least one solution for the NSE \eqref{eq: weak_NSE1}-\eqref{eq: weak_NSE2}.  Moreover, any such solution satisfies
\begin{equation}
    \| \nabla u \| \le \nu^{-1} \| f \|_{-1}. \label{nsestab}    
\end{equation}
If $ \alpha_0 < 1$ then solutions are unique.
\end{lemma}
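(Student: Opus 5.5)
The statement has three parts: existence of a weak solution, the bound \eqref{nsestab}, and uniqueness when $\alpha_0<1$. I would work throughout in the divergence-free formulation \eqref{eq: weak_NSE3} on $V$ and recover the pressure at the end from the inf-sup condition, as the paper notes.

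\emph{A priori bound.} Test \eqref{eq: weak_NSE3} with $\chi=u$. For $u\in V$ we have $b(u,u,u)=0$: integrating by parts with $\nabla\cdot u=0$ and $u|_{\partial\Omega}=0$ gives $b(u,v,v)=0$ for all $v\in X$. Hence
\[
\nu\|\nabla u\|^2=\langle f,u\rangle\le \|f\|_{-1}\|\nabla u\|.
\]
Dividing by $\|\nabla u\|$ (the case $u=0$ is trivial) gives \eqref{nsestab}. This step is immediate and applies to any solution, so it holds regardless of how the solution was constructed.

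\emph{Existence.} I would use a Galerkin approximation. Take a basis $\{w_j\}$ of the separable space $V$ and let $V_m=\mathrm{span}\{w_1,\dots,w_m\}$. On $V_m$, define $P:V_m\to V_m$ by
\[
(\nabla P(v),\nabla\chi)=\nu(\nabla v,\nabla\chi)+b(v,v,\chi)-\langle f,\chi\rangle \quad\text{for all } \chi\in V_m .
\]
By the skew-symmetry above, $(\nabla P(v),\nabla v)\ge \nu\|\nabla v\|^2-\|f\|_{-1}\|\nabla v\|$, which is positive on the sphere $\|\nabla v\|=R$ for any $R>\nu^{-1}\|f\|_{-1}$. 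The finite-dimensional Brouwer corollary (the "acute angle" lemma) then gives a zero $u_m$, and $u_m$ satisfies \eqref{nsestab} uniformly in $m$.

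Extract a subsequence with $u_m\rightharpoonup u$ weakly in $V$. By Rellich, the convergence is strong in $L^2$, and also in $L^p$ for $p<6$, which covers $d\le3$. The linear terms pass to the limit directly.

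\emph{Main obstacle: the nonlinear term.} The delicate step is passing to the limit in $b(u_m,u_m,w_j)$. I would rewrite it as $-b(u_m,w_j,u_m)$ for fixed smooth $w_j$ (taking the basis from divergence-free test functions dense in $V$). It is then a quadratic expression in $u_m$ against the bounded function $\nabla w_j$, so it converges by the strong $L^2$ convergence. Density then extends the limit identity to all $\chi\in V$.

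\emph{Uniqueness.} Let $u_1,u_2$ be two solutions and set $e=u_1-u_2$. Subtracting the equations and writing
\[
b(u_1,u_1,\chi)-b(u_2,u_2,\chi)=b(e,u_1,\chi)+b(u_2,e,\chi),
\]
test with $\chi=e$. The second term vanishes by skew-symmetry. Using \eqref{eq: bbound2} and \eqref{nsestab} on the first term,
\[
\nu\|\nabla e\|^2\le M\|\nabla e\|^2\|\nabla u_1\|\le M\nu^{-1}\|f\|_{-1}\|\nabla e\|^2 ,
\]
that is, $(1-\alpha_0)\nu\|\nabla e\|^2\le0$. When $\alpha_0<1$ this forces $e=0$.
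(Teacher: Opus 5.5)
Your proof is correct. The a priori bound and the uniqueness step are the standard arguments behind the cited result. For the bound you test with $u$ and use $b(u,u,u)=0$. For uniqueness you subtract, test with $e$, and bound $b(e,u_1,e)$ using \eqref{eq: bbound2} and \eqref{nsestab}, which gives $(1-\alpha_0)\nu\|\nabla e\|^2\le 0$.

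The paper does not prove this lemma itself; it cites \cite{Layton08}. Its proof of Theorem~\ref{cdansewp} says existence there comes from the Leray--Schauder theorem. You instead use Galerkin approximation, the Brouwer acute-angle lemma, and a compactness limit, so your existence argument is a genuinely different route.

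In the Leray--Schauder route one proceeds as follows:
\begin{itemize}
\item define a solution map $T$ through a linear Stokes- or Oseen-type solve;
\item check that $T$ is continuous and compact, which is where Rellich enters;
\item bound all solutions of $u=\lambda T(u)$, $\lambda\in[0,1]$, by the same energy estimate.
\end{itemize}
The nonlinear compactness is thus packaged into the compactness of $T$. This is also the template the paper follows for CDA-NSE.

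Your route is more elementary and constructive. Its finite-dimensional step alone already gives existence, with the same bound, for Galerkin problems on any finite-dimensional subspace of $V$, such as a divergence-free finite element space. The price is the explicit limit in $b(u_m,u_m,w_j)$. You handle that correctly via skew-symmetry and smooth divergence-free test functions. This uses density of such functions in $V$, which holds on the Lipschitz domains considered here. If you want to avoid that density result, you can pass to the limit directly for any $w_j\in V$: combine strong convergence $u_m\to u$ in $L^4$ (Rellich, $d\le 3$) with weak convergence $\nabla u_m\rightharpoonup\nabla u$ in $L^2$.
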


\subsection{CDA Background}

We assume that the interpolant $I_H$ is linear and that there exists a constant $C_I>0$ such that 
\begin{align}
\| I_H \phi \| & \le C_I \| \phi \|,\label{interp1} \\
\| \phi - I_H \phi \| & \le C_I H \| \nabla \phi \|, \label{interp2}
\end{align}
for all $\phi\in X$.  It is known that the Scott-Zhang interpolant and the $L^2$ projection onto piecewise constants both satisfy these constraints \cite{GN20}.
Under this assumption, we have the following bound for any $\phi \in X$: \cite{GLNR24,GN20}
\begin{equation}
\frac{\nu}{2} \| \nabla \phi \|^2 + \lambda \| \phi \|^2
\le
\nu \| \nabla \phi \|^2 + \mu \| I_H \phi \|^2, \label{lowerboundlam}
\end{equation}
where $\lambda = \min \{ \mu, \frac{\nu}{2C_I^2 H^2} \}.$

The convergence of the CDA-Picard+CDA-Newton algorithm in Theorem \ref{Thm: CDA-Picard_res} is in terms of the following weighted $H_1$ norm.
\begin{equation} \label{eq: starnorm}
    \norm{\phi}_* = \pare{ \frac{\hat\nu}{4} \norm{ \Grad \phi }^2 +  \lambda \norm{ \phi }^2 }^{1/2}.
\end{equation}

\section{CDA-NSE with inconsistent viscosity: well-posedness and consistency error}
\label{sec: main}

This section focuses on the CDA-NSE system and establishes some basic properties for that system that are necessary to justify the proposed algorithms.  The system comes from the steady NSE combined with CDA that nudges towards partial solution data that may not be consistent with the best known approximation of the viscosity.  The CDA-NSE system takes the form: Given $f\in H^{-1}(\Omega)$, $\hat \nu>0$, $\mu>0$ and $I_Hu$, find $v\in V$ satisfying
\begin{align}
\hat \nu(\nabla v,\nabla \chi) + b(v,v,\chi) + \mu(I_H (v-u),I_H \chi)
=\langle f,\chi \rangle. \label{cdanse}
\end{align}
The parameter $\mu>0$ is a nudging parameter, which enforces how strongly the nudging penalizes the difference between the interpolant of the CDA-NSE solution and the interpolant of the true solution.  The analysis below will guide the choices of $H$ and $\mu$.

We first consider stability of the system \eqref{cdanse}.
\begin{lemma}\label{cdansestab}
Solutions to CDA-NSE \eqref{cdanse} satisfy the a priori bound
\begin{equation}
 \| \nabla v \| \le \left( \hat \nu^{-2} + \mu C_P^2 C_I^2 \nu^{-2}\hat \nu^{-1} \right)^{1/2} \| f \|_{-1} := C_v.
\end{equation}


\end{lemma}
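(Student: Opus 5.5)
Test \eqref{cdanse} with $\chi = v\in V$. Since $v$ is divergence-free and vanishes on $\partial\Omega$, $b(v,v,v)=0$, which leaves
\[
\hat\nu\|\nabla v\|^2 + \mu\|I_H v\|^2 = \langle f,v\rangle + \mu(I_H u, I_H v).
\]
The plan is to absorb both right-hand side terms into the left with Young's inequality.

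For the forcing term, I will use $\langle f,v\rangle \le \|f\|_{-1}\|\nabla v\| \le \frac{1}{2\hat\nu}\|f\|_{-1}^2 + \frac{\hat\nu}{2}\|\nabla v\|^2$. For the nudging term, I will use $\mu(I_H u,I_H v)\le \frac{\mu}{2}\|I_H u\|^2 + \frac{\mu}{2}\|I_H v\|^2$. After absorbing, and dropping the leftover $\frac{\mu}{2}\|I_Hv\|^2\ge 0$, this gives
\[
\hat\nu\|\nabla v\|^2 \le \hat\nu^{-1}\|f\|_{-1}^2 + \mu\|I_H u\|^2 .
\]

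The remaining step is to control the data term by the true solution's forcing. By \eqref{interp1} and Poincaré, $\|I_H u\|\le C_I\|u\|\le C_IC_P\|\nabla u\|$. The true solution $u$ solves the NSE with viscosity $\nu$, so \eqref{nsestab} gives $\|\nabla u\|\le \nu^{-1}\|f\|_{-1}$. Hence $\mu\|I_Hu\|^2\le \mu C_I^2C_P^2\nu^{-2}\|f\|_{-1}^2$. Dividing by $\hat\nu$ and taking square roots yields
\[
\|\nabla v\|\le \left(\hat\nu^{-2}+\mu C_P^2C_I^2\nu^{-2}\hat\nu^{-1}\right)^{1/2}\|f\|_{-1},
\]
which is exactly $C_v$.

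There is no real obstacle. The only point needing care is the choice of Young's weights: they must leave exactly the constants $\hat\nu^{-2}$ and $\mu C_P^2C_I^2\nu^{-2}\hat\nu^{-1}$ with no stray factors of $2$, and the even split described above does this. One should also note that the bound on $\|\nabla u\|$ uses the true viscosity $\nu$, not $\hat\nu$, which is why both viscosities appear in the constant.
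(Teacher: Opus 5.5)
Your proof is correct and follows the paper's argument essentially step by step: test with $\chi=v$ so the nonlinear term vanishes, apply Cauchy--Schwarz and Young to both right-hand side terms, then bound $\|I_H u\|$ using \eqref{interp1}, Poincar\'e, and \eqref{nsestab} with the true viscosity $\nu$. Your version also writes the intermediate steps correctly as inequalities, whereas the paper's displayed chain uses equality signs.
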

\begin{proof}
Choosing $\chi=v$ in \eqref{cdanse} eliminates the nonlinear term and gives the equation
\[
\hat \nu \| \nabla v \|^2 + \mu \| I_H v\|^2 = \langle f,v \rangle  + \mu(I_H u,I_H v).
\]
Using the definition of the $H^{-1}$ norm on the first term and the Cauchy-Schwarz inequality on the second term, we obtain the bound
\[
\hat \nu \| \nabla v \|^2 + \mu \| I_H v\|^2 = \| f\|_{-1} \| \nabla v \| +  \mu \| I_H u \| \| I_H v\|.
\]
Now using Young's inequality on both right hand side terms and then reducing provides us with
\[
\hat \nu \| \nabla v \|^2 + \mu \| I_H v\|^2 = \hat \nu^{-1} \| f\|_{-1}^2  +  \mu \| I_H u \|^2.
\]
Using the $L^2$ stability of $I_H$ from \eqref{interp1} and Poincar\'{e}'s inequality followed by the a priori bound on $\| \nabla u\|$ from \eqref{nsestab}, we upper bound the last right hand side term and obtain
\[
\hat \nu \| \nabla v \|^2 + \mu \| I_H v\|^2 = \hat \nu^{-1} \| f\|_{-1}^2  +  \mu C_P^2 C_I^2 \nu^{-2} \| f \|_{-1}^2.
\]
Reducing completes the proof. 
\end{proof}

We next establish well-posedness of \eqref{cdanse}, provided enough partial solution data is known.
\begin{theorem}\label{cdansewp}
If  $H \leq \frac{\sqrt{2}\hat{\nu}^2}{\sqrt{27} C_I M^2 C_v^2} $ and $\mu\ge \frac{\hat \nu}{2C_I^2 H^2}$, then the CDA-NSE system \eqref{cdanse} is well-posed.
\end{theorem}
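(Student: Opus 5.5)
Well-posedness here means existence of a solution in $V$ to \eqref{cdanse}, uniqueness, and continuous dependence on data; the a priori bound of Lemma \ref{cdansestab} already supplies the stability part.

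\textbf{Existence.} I would follow the standard Leray--Schauder/Galerkin argument used for the steady NSE. On finite-dimensional subspaces $V_m\subset V$, the map defined by the left side of \eqref{cdanse} minus the right side is continuous. Testing with $v$ makes it coercive on a large enough ball: the nonlinear term vanishes since $b(v,v,v)=0$, and the nudging term gives $\mu\|I_Hv\|^2-\mu(I_Hu,I_Hv)$, which is controlled exactly as in the proof of Lemma \ref{cdansestab}. Brouwer's theorem (in its acute-angle form) then gives Galerkin solutions uniformly bounded by $C_v$. Passing to the limit uses compactness of $H^1_0\hookrightarrow L^2$ together with the $L^2$-continuity of $I_H$ from \eqref{interp1}, which makes the nudging term pass to the limit. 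This step is routine.

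\textbf{Uniqueness (the main step).} Let $v_1,v_2$ be two solutions and set $e=v_1-v_2$. Subtracting the equations and testing with $\chi=e$, the data $u$ cancels and $b(v_2,e,e)=0$, so
\[
\hat\nu\|\nabla e\|^2+\mu\|I_He\|^2=-b(e,v_1,e).
\]
For the left side I would use \eqref{lowerboundlam} with $\hat\nu$ in place of $\nu$. Because $\mu\ge \hat\nu/(2C_I^2H^2)$, we get $\lambda=\hat\nu/(2C_I^2H^2)$, and the left side is at least $\frac{\hat\nu}{2}\|\nabla e\|^2+\frac{\hat\nu}{2C_I^2H^2}\|e\|^2$. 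For the right side I would use \eqref{eq: bbound1} and $\|\nabla v_1\|\le C_v$ to get $M C_v\|e\|^{1/2}\|\nabla e\|^{3/2}$. Young's inequality with exponents $4/3$ and $4$ then splits this as $\frac{\hat\nu}{2}\|\nabla e\|^2+c\,M^4C_v^4\hat\nu^{-3}\|e\|^2$, and a careful choice of weights gives the constant $c=\frac{27}{32}$. The result is
\[
\frac{\hat\nu}{2C_I^2H^2}\|e\|^2\le \frac{27M^4C_v^4}{32\,\hat\nu^3}\|e\|^2 .
\]
This forces $e=0$ whenever $\frac{1}{2C_I^2H^2}>\frac{27M^4C_v^4}{32\hat\nu^4}$.

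The main obstacle is matching the constants to the stated hypothesis $H\le \sqrt2\hat\nu^2/(\sqrt{27}C_IM^2C_v^2)$. I would tune the Young split so that the $\|\nabla e\|^2$ term is fully absorbed, or alternatively keep part of the gradient term and invoke Poincar\'e. With the split above, the threshold comes out as $H^2\le 16\hat\nu^4/(27C_I^2M^4C_v^4)$, which is weaker than what is assumed, so the stated $H$ suffices. The worry is a strict-versus-nonstrict inequality at equality. I would handle it by keeping a little of the $\|\nabla e\|^2$ term on the left, which makes the bound strict.

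\textbf{Continuous dependence.} The same error argument, applied to two data sets $(f_i,I_Hu_i)$, yields Lipschitz dependence of $v$ on $f$ and $I_Hu$. This completes well-posedness.
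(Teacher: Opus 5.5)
Your proposal is correct and follows essentially the paper's proof. Existence comes from a standard fixed-point argument (the paper invokes Leray--Schauder; your Galerkin--Brouwer version is the same idea). Uniqueness comes from testing the difference equation with $e$, using \eqref{lowerboundlam} with $\lambda=\hat\nu/(2C_I^2H^2)$, \eqref{eq: bbound1} with $\|\nabla v_i\|\le C_v$, and Young's inequality.

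The only difference is the Young split. The paper absorbs just $\frac{\hat\nu}{4}\|\nabla e\|^2$, which gives the constant $\frac{27}{4}$ and exactly the stated threshold on $H$, while keeping $\frac{\hat\nu}{4}\|\nabla e\|^2$ on the left so that equality in the hypothesis is harmless. Your full absorption gives $\frac{27}{32}$ and a threshold larger by a factor $2\sqrt2$, so your strict-versus-nonstrict worry is moot: the stated hypothesis already implies your condition strictly.
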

\begin{proof}
Since $\mu>0$ is fixed and $\| u \| \le C_P \| \nabla u\| \le C_P \nu^{-1} \| f\|_{-1}$, thanks to \eqref{interp1} and Lemma \ref{cdansestab}, the existence of solutions to \eqref{cdanse} can be established using the Leray-Schauder theorem by mirroring the proof of weak NSE solution existence in \cite{Layton08}.

For uniqueness, suppose that $v_1, v_2 \in V$ are both solutions to the CDA–NSE system. Subtracting the equation \eqref{cdanse} satisfied by $v_2$ from that satisfied by $v_1$ yields

\[
\hat \nu(\nabla e,\nabla \chi) + b(v_1,e,\chi) + b(e,v_2,\chi) + (I_H e,I_H \chi)=0,
\]
where $e=v_1-v_2$.  Taking $\chi=e$ makes the first nonlinear term vanish and yields 
\[
\hat \nu \| \nabla e \|^2 + \mu \| I_H e \|^2 = -b(e,v_2,e).
\]
Lower bounding the left side using \eqref{lowerboundlam}
 and upper bounding the right side using \eqref{eq: bbound1} we obtain the estimate
 \begin{equation}
\frac{ \hat \nu }{2} \| \nabla e \|^2 + \lambda \| e \|^2
\le M \| \nabla v_1 \| \| e \|^{1/2} \| \nabla e \|^{3/2},
\end{equation}
where $\lambda=\min \{ \mu, \frac{\hat \nu}{2 C_I^2 H^2}$ \}.  Next using Lemma \ref{cdansestab} and Young's inequality, we get that 
 \begin{equation}\label{eq: 3.4}
\frac{ \hat \nu }{2} \| \nabla e \|^2 + \lambda \| e \|^2
\le \frac{27}{4} M^4 C_v^4 \hat\nu^{-3} \| e \|^2 + \frac{\hat \nu}{4}\| \nabla e \|^{2}.
\end{equation}
Reducing and using the assumed bound that $\mu\ge \frac{\hat \nu}{2C_I^2 H^2}$ provides us with
\begin{equation}
\frac{ \hat \nu }{4} \| \nabla e \|^2 
+ \left( \frac{\hat\nu}{2 C_I^2 H^2}  - \frac{27}{4} M^4 C_v^4 \hat\nu^{-3} \right) \| e \|^2 \le 0.
\end{equation}
Finally the assumption on the size of $H$ completes the proof of uniqueness.  Hence we have established existence and uniqueness of the system \eqref{cdanse} as well as its continuous dependence on the data in Lemma \ref{cdansestab}.  This completes the proof.
\end{proof}

With the well posedness of CDA-NSE established, we next prove an upper bound for its consistency error arising from the mismatch in viscosity.

\begin{theorem}\label{cdanseconsistency}
Let $u$ be the NSE solution of \eqref{eq: weak_NSE3} from which the partial solution data $I_H u$ is obtained, and let $v$ solve \eqref{cdanse}.  Then under the assumptions of Theorem \ref{cdansewp} and if $H<\left(46C_I \nu^{3/2} \hat \nu^{-3/2} \alpha_0^2 \right) ^{-1}$, 
\begin{align*}
   \frac{\| \nabla (u-v)   \|}{\norm{\Grad u} }  & \le 
2\sqrt 2 \frac{| \hat \nu- \nu|}{ \hat \nu},  \\
\| u-v \| & \le 
 \sqrt{2} C_I H \frac{|\hat \nu- \nu|}{\hat\nu \nu} \norm{f}_{-1}.
\end{align*}

\end{theorem}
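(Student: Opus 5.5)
Set $e = u - v \in V$. Note that $u$ satisfies \eqref{cdanse} with $\hat\nu$ replaced by $\nu$, because for $v=u$ the nudging term vanishes. The plan is to derive an energy equation for $e$, control the nonlinear term by the smallness of $H$ (which in turn makes $\lambda$ large), and bound the viscosity-mismatch term by $\|\nabla u\|$.

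\textbf{Error equation.} Subtract \eqref{cdanse} from \eqref{eq: weak_NSE3}, written with $\hat\nu$ in place of $\nu$ by adding and subtracting $\hat\nu(\nabla u,\nabla\chi)$. This gives
\[
\hat\nu(\nabla e,\nabla\chi) + b(u,u,\chi) - b(v,v,\chi) + \mu(I_H e, I_H \chi) = (\hat\nu-\nu)(\nabla u,\nabla \chi).
\]
Write $b(u,u,\chi)-b(v,v,\chi) = b(e,u,\chi) + b(v,e,\chi)$ and take $\chi = e$, so that $b(v,e,e)=0$. Then
\[
\hat\nu\|\nabla e\|^2 + \mu\|I_H e\|^2 = -b(e,u,e) + (\hat\nu-\nu)(\nabla u,\nabla e).
\]
Expanding around $u$ instead of $v$ is the better choice here: it puts the known NSE solution in the nonlinear term, and $\|\nabla u\|\le \nu^{-1}\|f\|_{-1}$ by \eqref{nsestab}. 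This is presumably why $\alpha_0$ and $\nu$ enter the hypothesis on $H$.

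\textbf{Nonlinear term.} Apply \eqref{lowerboundlam} with $\hat\nu$, using $\mu \ge \hat\nu/(2C_I^2H^2)$ so that $\lambda = \hat\nu/(2C_I^2H^2)$. This bounds the left side below by $\frac{\hat\nu}{2}\|\nabla e\|^2 + \lambda\|e\|^2$. Bound $b(e,u,e)$ by \eqref{eq: bbound1} as $M\|\nabla u\|\,\|e\|^{1/2}\|\nabla e\|^{3/2}$. Young's inequality then splits this into $\frac{\hat\nu}{8}\|\nabla e\|^2$ plus $C M^4\|\nabla u\|^4\hat\nu^{-3}\|e\|^2$. The latter is absorbed into $\lambda\|e\|^2$, keeping $\frac{\lambda}{2}\|e\|^2$, provided $\frac{\hat\nu}{4C_I^2H^2} \ge C M^4 \nu^{-4}\|f\|_{-1}^4\hat\nu^{-3} = C\alpha_0^2\hat\nu^{-3}$ up to the power of $\nu$. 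This yields a condition of the form $H \lesssim \hat\nu^{2}/(C_I M^2\|\nabla u\|^2)$, which should match the stated hypothesis $H<(46C_I\nu^{3/2}\hat\nu^{-3/2}\alpha_0^2)^{-1}$ after rewriting in terms of $\alpha_0$. I expect this step to be the main obstacle: the constants must be tracked carefully so that they close to exactly $46$ and the correct powers of $\nu$ and $\hat\nu$.

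\textbf{Mismatch term and conclusion.} Bound $(\hat\nu-\nu)(\nabla u,\nabla e)$ by $|\hat\nu-\nu|\,\|\nabla u\|\,\|\nabla e\|$. This can be handled in two ways, one for each estimate.

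For the gradient estimate, apply Young against $\frac{\hat\nu}{8}\|\nabla e\|^2$. This gives $\frac{\hat\nu}{8}\|\nabla e\|^2 \le 2\hat\nu^{-1}|\hat\nu-\nu|^2\|\nabla u\|^2$, hence $\|\nabla e\| \le 4|\hat\nu-\nu|\,\|\nabla u\|/\hat\nu$. With a sharper split, $\frac{\hat\nu}{4}\|\nabla e\|^2 \le \ldots$, this becomes $2\sqrt2\,|\hat\nu-\nu|\,\|\nabla u\|/\hat\nu$. So I would allot the Young pieces as $\hat\nu/8$ to the nonlinear term and $\hat\nu/8$ to the mismatch term, leaving $\hat\nu/4$, and adjust until the stated constant appears.

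For the $L^2$ estimate, keep $\lambda\|e\|^2$ on the left. The relation $\lambda\|e\|^2 \lesssim \hat\nu^{-1}|\hat\nu-\nu|^2\|\nabla u\|^2$ together with $\lambda = \hat\nu/(2C_I^2H^2)$ gives $\|e\|^2 \lesssim 2C_I^2H^2\hat\nu^{-2}|\hat\nu-\nu|^2\|\nabla u\|^2$. Using \eqref{nsestab}, this yields $\|e\| \le \sqrt2\,C_IH\,|\hat\nu-\nu|\,\|f\|_{-1}/(\hat\nu\nu)$, again after fixing the Young constants (for instance by dropping the gradient term and using the full mismatch bound).
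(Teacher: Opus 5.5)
Your argument is the paper's proof. It uses the same error equation, the same test function $\chi=e$, and the same lower bound \eqref{lowerboundlam} with $\lambda=\hat\nu/(2C_I^2H^2)$. Then \eqref{eq: bbound1} plus Young gives exactly the paper's term $54\hat\nu^{-3}M^4\|\nabla u\|^4\|e\|^2\le 54\hat\nu^{-3}\nu^4\alpha_0^4\|e\|^2$. Note the power: it is $\nu^4\alpha_0^4$, not $\alpha_0^2$. Your split ($\hat\nu/8$ to each right-hand term, leaving $\hat\nu/4$) gives the same $2\sqrt2$ as the paper's split ($\hat\nu/4$ to the mismatch, leaving $\hat\nu/8$). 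It only needs $\lambda\ge 54\hat\nu^{-3}\nu^4\alpha_0^4$.

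Two parts of the bookkeeping you flagged will not close the way you expect.

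\textbf{The hypothesis on $H$.} The absorption condition is $H\le\hat\nu^2/(6\sqrt3\,C_I\nu^2\alpha_0^2)$. This is exactly the condition the paper's own proof invokes. No rewriting turns it into the stated $\nu^{3/2}\hat\nu^{-3/2}$ form, so do not try to land on $46$. The $46$-hypothesis alone does not imply it when $\hat\nu/\nu$ is small. The stated hypotheses do suffice if you use both of them:
\begin{itemize}
\item the $46$-bound gives $H\le\hat\nu^2/(18C_I\nu^2\alpha_0^2)$ whenever $\hat\nu/\nu\ge(18/46)^2\approx0.15$;
\item the $H$-bound of Theorem \ref{cdansewp}, combined with $C_v\ge\hat\nu^{-1}\|f\|_{-1}$, gives $H\le\sqrt{2/27}\,(\hat\nu/\nu)^4/(C_I\alpha_0^2)$, which implies the same whenever $\hat\nu/\nu\le 0.45$.
\end{itemize}
These two ranges overlap, so every $\hat\nu>0$ is covered.

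\textbf{The $L^2$ constant.} To get $\sqrt2$ you cannot settle for retaining $\lambda/2$. Give all of the remaining $3\hat\nu/8$ to the mismatch term. This yields $(\lambda-54\hat\nu^{-3}\nu^4\alpha_0^4)\|e\|^2\le\frac{2}{3\hat\nu}|\hat\nu-\nu|^2\|\nabla u\|^2$. Now impose $\lambda\ge162\,\hat\nu^{-3}\nu^4\alpha_0^4$, i.e.\ $H\le\hat\nu^2/(18C_I\nu^2\alpha_0^2)$, so that $\frac23\lambda$ survives on the left. Then $\|e\|^2\le|\hat\nu-\nu|^2\|\nabla u\|^2/(\hat\nu\lambda)$, and \eqref{nsestab} gives exactly $\sqrt2\,C_IH|\hat\nu-\nu|\|f\|_{-1}/(\hat\nu\nu)$. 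Retaining only $\lambda/2$ produces $\sqrt{8/3}$ instead.
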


\begin{proof}
Subtracting CDA-NSE \eqref{cdanse} from the NSE \eqref{eq: weak_NSE3} gives us the error equation
\[
\hat \nu (\nabla e,\nabla \chi) + (\nu-\hat \nu)(\nabla u,\nabla \chi) + b(v,e,\chi) + b(e,u,\chi) + \mu(I_H e,I_H \chi) = 0 \ \forall \chi \in V,
\]
where we denote $e=u-v$.  Taking $\chi=e$ eliminates the first nonlinear term and provides us with
\[
\hat \nu \| \nabla e \|^2 + \mu \| I_H e \|^2 = (\hat \nu- \nu)(\nabla u,\nabla e) - b(e,u,e).
\]
Lower bounding the left hand side using \eqref{lowerboundlam}, and using Cauchy-Schwarz and \eqref{eq: bbound1} on the right hand side gives
\begin{equation}
\frac{\hat \nu}{2} \| \nabla e \|^2 + \lambda \| e \|^2 \le 
|\hat \nu- \nu| \| \nabla u \|  \| \nabla e \|  + M \| \nabla u \| \| e \|^{1/2} \| \nabla e \|^{3/2}.
\end{equation}
Now using Young's inequality on both right hand side terms and using the a priori NSE stability bound \eqref{nsestab}, we get 
\begin{equation}
\frac{\hat \nu}{8} \| \nabla e \|^2 + \lambda \| e \|^2 \le 
\frac{|\hat \nu- \nu|^2}{\hat\nu} \norm{\Grad u}^2   + 54 \hat \nu^{-3} \nu^{4} \alpha_0^4 \| e \|^{2}. \label{Z1}
\end{equation}

Next using the assumptions on $\mu$ and $H$, $( H < \pare{ 6\sqrt{3} \hat{\nu}^{-2} \nu^2 C_I \alpha_0^2  }\inv) $, we get the following relative error bound,
\begin{equation}
   \frac{\| \nabla (u-v)   \|}{\norm{\Grad u} }  \le 
2\sqrt 2 \frac{| \hat \nu- \nu|}{ \hat \nu}.   
\end{equation}
For the $L^2$ bound, from \eqref{Z1} we use  the stability bound \eqref{nsestab} and the assumption on $H$ to get the following error bound,
$$ \norm{e} \leq \sqrt{2} C_I H \frac{|\hat \nu- \nu|}{\hat\nu \nu} \norm{f}_{-1}. $$

This completes the proof.

\end{proof}

\section{CDA-Picard + CDA-Newton to solve CDA-NSE with inconsistent viscosity}
\label{sec: Analysis}

We now present and analyze our new solver for the NSE when partial solution data and a potentially mismatched viscosity is given. The CDA-Picard + CDA-Newton iteration for solving the CDA-NSE system \eqref{cdanse} is given by: \\~ \\
Step 1 (CDA-Picard): Given $v_k\in V$, first find $\tilde v_{k+1}\in V$ satisfying for all $\chi \in V$,
    \begin{equation}
        \hat\nu (\nabla \tilde v_{k+1} , \nabla \chi) 
        + b(v_k, \tilde v_{k+1}, \chi)
        +\mu (I_H(\tilde v_{k+1} - u),I_H\chi) 
        = 
        \langle f, \chi\rangle. \label{CDAPicard}
    \end{equation}
Step 2 (CDA-Newton using CDA-Picard solution as input): find $v_{k+1}\in V$ satisfying  for all $\chi \in V$,
   \al
        \hat\nu(\Grad v_{k+1} , \Grad \chi) + b(\tilde v_{k+1}, v_{k+1}, \chi) +b(v_{k+1},\tilde v_{k+1}, \chi) - b(\tilde {v}_{k+1},\tilde {v}_{k+1},\chi)\nonumber \\
         +\mu (I_H(v_{k+1} - u), I_H\chi) = \langle f, \chi \rangle.\label{CDANewton}
    \eal


We first prove an a priori bound for the Step 1 (CDA-Picard) solution.
\begin{lemma}[Stability of CDA-Picard]
\label{lem: CDA_P_Stab}
The solution of the Step 1 CDA-Picard iteration satisfies
  \begin{equation}
        \frac{\hat\nu}{2} \norm{\nabla \tv_{k+1} }^2 
        +\lambda \norm{\tv_{k+1}}^2 
        \le \hat\nu^{-1} \norm{ f }_{-1}^2 + \mu \norm{I_H(u)}^2.  \label{CDAPicard0}
    \end{equation}
\end{lemma}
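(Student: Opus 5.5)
We test the Step 1 equation \eqref{CDAPicard} with $\chi=\tv_{k+1}\in V$. Since $v_k\in V$ is weakly divergence free and $\tv_{k+1}$ vanishes on $\partial\Omega$, the convective term $b(v_k,\tv_{k+1},\tv_{k+1})$ vanishes. (If a skew-symmetrized $b$ is used, this holds identically.) Moving the data term to the right-hand side gives the energy identity
\[
\hat\nu\norm{\Grad \tv_{k+1}}^2 + \mu\norm{I_H\tv_{k+1}}^2 = \langle f,\tv_{k+1}\rangle + \mu(I_H u, I_H\tv_{k+1}).
\]

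We bound the right-hand side exactly as in the proof of Lemma \ref{cdansestab}. The dual-norm definition and Young's inequality give $\langle f,\tv_{k+1}\rangle\le \norm{f}_{-1}\norm{\Grad\tv_{k+1}}\le \frac{\hat\nu}{2}\norm{\Grad\tv_{k+1}}^2+\frac{1}{2\hat\nu}\norm{f}_{-1}^2$. Cauchy--Schwarz and Young give $\mu(I_Hu,I_H\tv_{k+1})\le \frac{\mu}{2}\norm{I_Hu}^2+\frac{\mu}{2}\norm{I_H\tv_{k+1}}^2$. Absorbing the $\tv_{k+1}$ terms into the left side and multiplying by $2$ yields
\[
\hat\nu\norm{\Grad\tv_{k+1}}^2+\mu\norm{I_H\tv_{k+1}}^2\le \hat\nu^{-1}\norm{f}_{-1}^2+\mu\norm{I_Hu}^2 .
\]

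Finally, we lower bound the left side with \eqref{lowerboundlam}, applied with $\hat\nu$ in place of $\nu$. This uses $\lambda=\min\{\mu,\hat\nu/(2C_I^2H^2)\}$, consistent with the definition used in Theorem \ref{cdansewp} and in the $\norm{\cdot}_*$ norm. It gives $\frac{\hat\nu}{2}\norm{\Grad\tv_{k+1}}^2+\lambda\norm{\tv_{k+1}}^2$ on the left, which is \eqref{CDAPicard0}.

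There is no real obstacle here. The only points needing care are that the nonlinear term vanishes because $v_k\in V$, and that \eqref{lowerboundlam} is invoked with the approximate viscosity $\hat\nu$ (with $\lambda$ defined accordingly) rather than the true $\nu$. Well-posedness of the linear Step 1 problem follows from Lax--Milgram, since the bilinear form is coercive on $V$ by the same energy identity, so $\tv_{k+1}$ exists and the estimate applies.
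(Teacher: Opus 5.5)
Your proof is correct and follows essentially the same argument as the paper: test \eqref{CDAPicard} with $\chi=\tv_{k+1}$ so the convective term drops, and bound the right-hand side by Cauchy--Schwarz and Young to get $\hat\nu\norm{\Grad\tv_{k+1}}^2+\mu\norm{I_H\tv_{k+1}}^2\le\hat\nu^{-1}\norm{f}_{-1}^2+\mu\norm{I_Hu}^2$, then lower bound the left side via \eqref{lowerboundlam} with $\hat\nu$ in place of $\nu$. The paper leaves three points implicit that you state explicitly: why $b(v_k,\tv_{k+1},\tv_{k+1})=0$, that $\lambda$ depends on $\hat\nu$, and the Lax--Milgram solvability of Step 1.
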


\begin{proof}

    Choose $ \chi = \tv_{k+1}$ in \eqref{CDAPicard} resulting in
    \begin{equation}
        \hat\nu \norm{\nabla \tv_{k+1} }^2 
        +\mu (I_H(\tv_{k+1} - u),I_H \tv_{k+1}) 
        = 
        \langle f, \tv_{k+1} \rangle. \label{CDAPicard2}
    \end{equation}
    Since $ I_H$ is linear we can rearrange into
    \begin{equation}
        \hat\nu \norm{\nabla \tv_{k+1} }^2 
        +\mu \norm{I_H \tv_{k+1}} ^2
        = 
        \langle f, \tv_{k+1} \rangle + \mu (I_H u, I_H \tv_{k+1}).  \label{CDAPicard3}
    \end{equation}
Applying Cauchy-Schwarz and Young inequalities on the right hand side terms, we simplify the estimate to
    \begin{equation}
        \frac{\hat\nu}{2} \norm{\nabla \tv_{k+1} } ^2
        +\frac{\mu}{2} \norm{I_H \tv_{k+1}} ^2
        \leq
        \frac{\hat\nu^{-1}}{2} \norm{ f }_{-1}^2 + \frac{\mu}{2} \norm{I_H(u)}^2.  \label{CDAPicard4}
    \end{equation}
Next, we use \eqref{lowerboundlam} to lower bound the left hand side and obtain
    \begin{equation}
        \frac{\hat\nu}{2} \norm{\nabla \tv_{k+1} } ^2
        +\lambda \norm{\tv_{k+1}} ^2
        \le \hat\nu^{-1} \norm{ f }_{-1}^2 + \mu \norm{I_H(u)}^2,  \label{CDAPicard5}
    \end{equation}
which completes the proof.
\end{proof}

From \eqref{CDAPicard5} we can define $ C_{u,f} $ such that
\begin{equation}\label{eq: cuf}
    \norm{\nabla \tv_{k}} \leq \pare{ 2 \mu \hat{\nu}^{-1} \norm{ I_H (u) }^2 + 2\hat\nu^{-2}\norm{f}_{-1}^2  }^{1/2} := C_{u,f}.
\end{equation} 
We also define the data dependent parameter
\[
 \alpha = M \hat{\nu}^{-1}  C_{u,f}.  
\]

\begin{lemma}[Error of $ v_k - \tv_k$]\label{Lemma5}
We have the following error bound induced by Step 2 CDA-Newton using CDA-Picard solution as input. 

\begin{equation}
    \frac{\hat\nu}{4} \norm{\Grad (v_k - \tv_k)}^2 + \lambda \norm{ v_k - \tv_k }^2\leq \hat\nu \alpha^2 \norm{\Grad (v_k-v_{k-1}) }^2,
\end{equation}
    where 
\end{lemma}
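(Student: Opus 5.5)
The plan is to subtract the Step 1 equation \eqref{CDAPicard} at index $k$ (which defines $\tv_k$ from $v_{k-1}$) from the Step 2 equation \eqref{CDANewton} at index $k$ (which defines $v_k$ from $\tv_k$). Then I test with the difference $e := v_k - \tv_k$, which lies in $V$ because both functions do.

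Writing the two equations and subtracting, the linear terms combine directly. The viscous terms give $\hat\nu(\Grad e,\Grad\chi)$. In the nudging terms $I_H u$ cancels by linearity of $I_H$, leaving $\mu(I_H e, I_H\chi)$. The forcing terms cancel. The nonlinear terms are
\[
b(\tv_k,v_k,\chi)+b(v_k,\tv_k,\chi)-b(\tv_k,\tv_k,\chi)-b(v_{k-1},\tv_k,\chi),
\]
and I regroup them, using linearity of $b$ in each slot, as
\[
b(\tv_k,e,\chi)+b(v_k-v_{k-1},\tv_k,\chi).
\]
This regrouping is the key algebraic step. Choosing $\chi=e$ kills $b(\tv_k,e,e)$ by skew-symmetry, since $\tv_k\in V$. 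The resulting identity is
\[
\hat\nu\norm{\Grad e}^2+\mu\norm{I_H e}^2=-b(v_k-v_{k-1},\tv_k,e).
\]

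Next I bound each side. On the left, I apply \eqref{lowerboundlam} with $\hat\nu$ in place of $\nu$, so that $\lambda=\min\{\mu,\hat\nu/(2C_I^2H^2)\}$, which gives the lower bound $\frac{\hat\nu}{2}\norm{\Grad e}^2+\lambda\norm{e}^2$. On the right, I use \eqref{eq: bbound2} together with the Picard stability bound \eqref{eq: cuf}, $\norm{\Grad\tv_k}\le C_{u,f}$, which comes from Lemma \ref{lem: CDA_P_Stab}. This gives an upper bound of $M C_{u,f}\norm{\Grad(v_k-v_{k-1})}\norm{\Grad e}$.

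Finally, Young's inequality bounds this product by $\frac{\hat\nu}{4}\norm{\Grad e}^2+\hat\nu^{-1}M^2C_{u,f}^2\norm{\Grad(v_k-v_{k-1})}^2$. Absorbing the gradient term into the left side leaves $\frac{\hat\nu}{4}\norm{\Grad e}^2+\lambda\norm{e}^2$. Since $\hat\nu^{-1}M^2C_{u,f}^2=\hat\nu\alpha^2$ by the definition $\alpha=M\hat\nu^{-1}C_{u,f}$, the claimed bound follows. The only delicate point is the regrouping of the trilinear terms so that the $\tv_k$-convection term vanishes; the remaining steps are routine.
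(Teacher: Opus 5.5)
Your proposal is correct and follows the paper's proof essentially step for step. Like the paper, you subtract \eqref{CDAPicard} from \eqref{CDANewton} at index $k$ and regroup the trilinear terms into $b(\tilde v_k, v_k-\tilde v_k,\chi)+b(v_k-v_{k-1},\tilde v_k,\chi)$, then test with $\chi=v_k-\tilde v_k$ and close the estimate with \eqref{lowerboundlam}, \eqref{eq: bbound2}, \eqref{eq: cuf} and Young's inequality, with the same final identification $\hat\nu^{-1}M^2C_{u,f}^2=\hat\nu\alpha^2$.
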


\begin{proof}
    Subtracting \eqref{CDAPicard} from \eqref{CDANewton} both indexed at k results in
    \al
        \hat\nu (\Grad (v_k -\tv_k) , \Grad \chi) + \mu (I_H( v_k - \tv_k) , I_H \chi) 
        + b ( \tv_{k}, v_k - \tv_k, \chi ) + b(v_k - v_{k-1}, \tv_k, \chi)  = 0.
    \eal
Recall that $ e_k = v_k - v_{k-1}$ then choosing $ \chi = v_k -\tv_k$ yields 
    \al
        \hat\nu \norm{\Grad (v_k -\tv_k)}^2 + \mu \norm{ I_H( v_k -\tv_k) }^2
        &= -  b( e_k, \tv_k, v_k -\tv_k) .
    \eal
Applying \eqref{lowerboundlam} to the left hand side along with \eqref{eq: bbound2} and \eqref{eq: cuf}
to the right hand side yields the estimate 
    \al \frac{\hat\nu}{2} \norm{\Grad (v_k -\tv_k)}^2 + \lambda \norm{ v_k -\tv_k }^2 &\leq M \norm{\Grad e_k } \norm{\Grad \tv_k } \norm{\Grad v_k -\tv_k} \nonumber\\
    &\leq M C_{u,f} \norm{\Grad e_k } \norm{\Grad v_k -\tv_k} \nonumber\\
    &\leq \hat\nu\inv M^2 C_{u,f}^2 \norm{\Grad e_k }^2 + \frac{\hat\nu}{4} \norm{\Grad v_k -\tv_k}^2.
    \eal
Reducing gives
    \al \frac{\hat\nu}{4} \norm{\Grad( v_k -\tv_k)}^2 + \lambda \norm{ v_k -\tv_k }^2
    &\leq \hat\nu\inv M^2 C_{u,f}^2 \norm{\Grad e_k }^2.
    \eal
Substituting $ \alpha$ completes the proof.
\end{proof}



We now prove a convergence result for the new nonlinear solver.  This result shows the method converges quadratically, and that as $H$ decreases the convergence basin expands as its radius is scaled by $H^{-1/2}$.
\begin{theorem}[Convergence of CDA-Picard + CDA-Newton]
\label{Thm: CDA-Picard_res}
Assume $H<\frac{\hat\nu^{1/2}}{16\alpha^2C_I}$.  Then the following bound for differences between iterates of the CDA-Picard + CDA-Newton iteration holds:
\begin{align}
\| \nabla e_{k+1} \|^2_* 
 \le 32 M^2 C_I H \alpha^2 \hat\nu^{-2} \| e_k \|_*^2 \left(   4\alpha^2 \| \nabla e_k \|^2 +
128 C_I H \hat\nu^{-2} \alpha^4 \norm{e_k}^4_* \right) \nonumber \\
\le 512 M^2 C_I H \alpha^4 \hat\nu^{-3} \| e_k \|_*^4 
+ 4096 M^2 C_I^2 H^2 \alpha^6 \hat\nu^{-4} \| e_k \|_*^6.
\label{thm3}
\end{align}
\end{theorem}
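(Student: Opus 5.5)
We plan to bound $e_{k+1}=v_{k+1}-v_k$ by working with the CDA-Newton step. We rewrite \eqref{CDANewton} at index $k+1$ using the CDA-NSE residual structure, and we exploit the quadratic Taylor-type cancellation of Newton linearized about $\tilde v_{k+1}$. Concretely, we subtract \eqref{CDANewton} at index $k$ from \eqref{CDANewton} at index $k+1$. Alternatively, and more cleanly, we compare $v_{k+1}$ with the exact solution structure via the identity
\[
b(\tv_{k+1},v_{k+1},\chi)+b(v_{k+1},\tv_{k+1},\chi)-b(\tv_{k+1},\tv_{k+1},\chi) = b(v_{k+1},v_{k+1},\chi) - b(v_{k+1}-\tv_{k+1},v_{k+1}-\tv_{k+1},\chi).
\]
This shows that each Newton iterate solves CDA-NSE up to a quadratic defect in $v_{k+1}-\tv_{k+1}$.

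Subtracting the resulting equations at consecutive indices gives an equation for $e_{k+1}$ of the form
\[
\hat\nu(\nabla e_{k+1},\nabla\chi)+b(v_{k+1},e_{k+1},\chi)+b(e_{k+1},v_k,\chi)+\mu(I_He_{k+1},I_H\chi) = b(w_{k+1},w_{k+1},\chi)-b(w_k,w_k,\chi),
\]
where $w_j=v_j-\tv_j$. It is simpler, however, to subtract CDA-Picard (whose solution $\tv_{k+1}$ depends on $v_k$) directly from CDA-Newton at index $k+1$, exactly as in Lemma \ref{Lemma5}. This yields a bound on $w_{k+1}$ in terms of $\tv_{k+1}-v_k$.

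We then test with $\chi=e_{k+1}$ (or $\chi = w_{k+1}$), which kills the skew term, and lower bound the left side by $\|\cdot\|_*^2$ via \eqref{lowerboundlam}. The right-hand side trilinear terms are bounded with \eqref{eq: bbound1}/\eqref{eq: bbound3}, keeping an $L^2$ factor $\|\cdot\|^{1/2}$. This $L^2$ factor is converted into an $H$-factor: by \eqref{lowerboundlam}, $\lambda\ge \hat\nu/(2C_I^2H^2)$ under the $\mu$ assumption, so $\|\phi\|\le \sqrt2 C_I H\hat\nu^{-1/2}\|\phi\|_*$. This is the source of the factor $C_IH$ and of the $H^{-1/2}$ basin scaling. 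We use Lemma \ref{Lemma5} to replace $\|w_k\|_*^2$ by $\hat\nu\alpha^2\|\nabla e_k\|^2\le 4\alpha^2\|e_k\|_*^2$, and \eqref{eq: cuf} together with the definition of $\alpha$ to control $\|\nabla\tv_k\|$ and $\|\nabla v_k\|$.

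The terms are assembled in a definite order. First, the quadratic Newton defect contributes $\|w\|\,\|\nabla w\|$-type products. Via Lemma \ref{Lemma5}, these give the $\alpha^2\|\nabla e_k\|^2$ term inside the parentheses. Second, the stability of $v_k$ relative to $\tv_k$ introduces a further correction of size $C_IH\hat\nu^{-2}\alpha^4\|e_k\|_*^4$, which produces the second term. Absorbing $\tfrac{\hat\nu}{4}\|\nabla e_{k+1}\|^2$ into the left side requires the smallness $H<\hat\nu^{1/2}/(16\alpha^2C_I)$, since the coefficient multiplying $\|e_{k+1}\|_*^2$ on the right is of size $C_IH\alpha^2\hat\nu^{-1/2}$ times constants. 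Finally, bounding $\|\nabla e_k\|^2\le 4\hat\nu^{-1}\|e_k\|_*^2$ converts the first line of \eqref{thm3} into the second.

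The main obstacle is controlling $\|\nabla v_k\|$, since only $\tv_k$ has an a priori bound. We plan to write $v_k=\tv_k+w_k$ and use Lemma \ref{Lemma5} to bound $\|\nabla w_k\|$ by $2\alpha\|\nabla e_k\|$. This is precisely what generates the higher-order $\|e_k\|_*^6$ term rather than a clean quartic bound. Tracking the constants $32$, $128$, $512$, $4096$ will require careful Young's inequality splits, but these are routine.
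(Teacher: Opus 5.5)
There is a genuine gap. You list several candidate equations without committing to one. The route you carry into your ``main obstacle'' paragraph does not close under the theorem's hypotheses. That route differences the Newton steps written in defect form, $\hat\nu(\nabla v_j,\nabla\chi)+b(v_j,v_j,\chi)+\mu(I_H(v_j-u),I_H\chi)=\langle f,\chi\rangle+b(w_j,w_j,\chi)$ with $w_j=v_j-\tilde v_j$, and tests with $e_{k+1}$.

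After $b(v_{k+1},e_{k+1},e_{k+1})$ drops you are left with $b(e_{k+1},v_k,e_{k+1})$. Splitting $v_k=\tilde v_k+w_k$ handles the $\tilde v_k$ piece. But $b(e_{k+1},w_k,e_{k+1})$ is quadratic in $e_{k+1}$ with a coefficient of size $M\|\nabla w_k\|\le 2M\alpha\|\nabla e_k\|$ (times an $H^{1/2}$-type factor if you keep an $L^2$ factor). It must be absorbed on the left, which requires $\|\nabla e_k\|$ to be small. That is a basin assumption the theorem does not make. The term also cannot be moved to the right as a quantity depending on $e_k$ alone, so it is not where an $\|e_k\|_*^6$ term comes from.

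The defect term $b(w_{k+1},w_{k+1},e_{k+1})$ has the same problem. Bounding $w_{k+1}$ by Lemma~\ref{Lemma5} makes it cubic in $e_{k+1}$. Your alternative, bounding $w_{k+1}$ by $\tilde v_{k+1}-v_k$, is left without any estimate of $\tilde v_{k+1}-v_k$ in terms of $e_k$. Your closing paragraph then assigns the two terms of the target bound to sources instead of deriving them.

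The missing idea is how the paper organizes the Newton difference. It differences the Newton steps in their original form, linearized about the Picard iterate. After testing with $e_{k+1}$, only three terms remain: $b(\tilde e_{k+1},\tilde v_k-v_k,e_{k+1})$, $b(e_{k+1},\tilde v_{k+1},e_{k+1})$ and $b(\tilde v_{k+1}-v_k,\tilde e_{k+1},e_{k+1})$, where $\tilde e_{k+1}=\tilde v_{k+1}-\tilde v_k$. The only term quadratic in $e_{k+1}$ carries $\tilde v_{k+1}$, which is a priori bounded by $C_{u,f}$. That is exactly what $H<\hat\nu^{1/2}/(16\alpha^2C_I)$ absorbs, and $\|\nabla v_k\|$ never appears.

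The other two terms are products of small quantities, controlled by three estimates:
(i) Differencing consecutive Picard steps gives $\frac{\hat\nu}{4}\|\nabla\tilde e_{k+1}\|^2+\lambda\|\tilde e_{k+1}\|^2\le 2C_IH\alpha^2\|e_k\|_*^2$. The $\|e_k\|^{1/2}$ factor in $b(e_k,\tilde v_k,\tilde e_{k+1})$ is the actual source of $C_IH$.
(ii) Lemma~\ref{Lemma5} bounds $v_k-\tilde v_k$, giving the $4\alpha^2\|\nabla e_k\|^2$ term.
(iii) Differencing Picard at $k+1$ against Newton at $k$ gives $\|\nabla(\tilde v_{k+1}-v_k)\|\lesssim M\hat\nu^{-1}\|\nabla(v_k-\tilde v_k)\|\,\|\nabla\tilde e_{k+1}\|$. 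This yields the $128C_IH\hat\nu^{-2}\alpha^4\|e_k\|_*^4$ term, whose product with (i) is the sixth-power term.
Neither (i) nor (iii) appears in your proposal.

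Your defect identity would in fact give a version of (iii) directly. With $d=\tilde v_{k+1}-v_k$, Picard at $k+1$ minus defect-form Newton at $k$ reads $\hat\nu(\nabla d,\nabla\chi)+b(v_k,d,\chi)+\mu(I_Hd,I_H\chi)=-b(w_k,w_k,\chi)$. Combined with $e_{k+1}=w_{k+1}+d$ and a Lemma~\ref{Lemma5}-type bound of $w_{k+1}$ by $d$, this gives quadratic convergence by a different route. However, the constants differ from the stated inequality, and this is not the argument you wrote.
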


\begin{proof}
We begin by subtracting CDA-Picard \eqref{CDAPicard} at step $k$ from CDA-Picard at step $k+1$.  Defining $\tilde e_k:= \tilde v_k - \tilde v_{k-1}$ and $e_k:= v_k - v_{k-1}$, we get that
    \begin{equation}
        \hat\nu (\nabla \tilde e_{k+1} , \nabla \chi) 
        + b(v_k, \tilde e_{k+1}, \chi) +  b(e_{k}, \tilde v_{k}, \chi) 
        +\mu (I_H(\tilde e_{k+1}),I_H\chi) 
        = 0, \: \forall v \in V. \label{CDAP1}
    \end{equation}
Choosing $ \chi = \tilde e_{k+1}$ vanishes the second left hand side term in \eqref{CDAP1} and  gives us
    \begin{equation}
        \hat\nu \norm{ \nabla \tilde e_{k+1} }^2 
        +\mu \norm{I_H(\tilde e_{k+1})}^2 
        = -b(e_{k}, \tilde v_{k}, \tilde e_{k+1}).  \label{CDAP2}
    \end{equation}
Applying \eqref{lowerboundlam} to the left hand side and \eqref{eq: bbound1}
to the right hand side yields the estimate 
    \begin{align}
        \frac{\hat\nu}{2} \norm{ \nabla \tilde e_{k+1} }^2 
        +\lambda \norm{\tilde e_{k+1}}^2
        & \leq M \norm{e_{k}}^{1/2} \norm{\Grad e_{k}}^{1/2} \norm{\Grad \tilde v_k} \norm{\Grad \tilde e_{k+1}} \nonumber \\
        & \le M C_{u,f} \norm{e_{k}}^{1/2} \norm{\Grad e_{k}}^{1/2}\norm{\Grad \tilde e_{k+1}}.\label{CDAP3}
    \end{align}
Applying Young's inequality on the right hand side term that
    \al
        \frac{\hat\nu}{2} \norm{ \nabla \tilde e_{k+1} }^2 
        +\lambda \norm{\tilde e_{k+1}}^2 
        & \le \hat\nu \alpha \norm{e_{k}}^{1/2} \norm{\Grad e_{k}}^{1/2}  \norm{\Grad \tilde e_{k+1}} \nonumber \\
        &\leq \alpha^2\hat\nu \norm{e_{k}} \norm{\Grad e_{k}} + \frac{\hat\nu}{4}  \norm{\Grad \tilde e_{k+1}}^2, \label{CDAP4}
    \eal
which simplifies to 
    \al
        \frac{\hat\nu}{4} \norm{ \nabla \tilde e_{k+1} }^2 
        +\lambda \norm{\tilde e_{k+1}}^2 
        \leq  \alpha^2\hat\nu \norm{e_{k}} \norm{\Grad e_{k}}. \label{CDAP5}
    \eal
Again applying Young's inequality to the right hand side of \eqref{CDAP5} produces the bound
    \al
        \frac{\hat\nu}{4} \norm{ \nabla \tilde e_{k+1} }^2 
        +\lambda \norm{\tilde e_{k+1}}^2 
        &\leq \alpha^2\hat\nu  \pare{   \frac{C_I H}{2}\norm{ \Grad e_{k}}^2 + \frac{1}{2C_I H} \norm{ e_k}^2 } \nonumber\\
        &\le  2 C_I H\alpha^2   \pare{  \frac{\hat\nu}{4} \norm{ \Grad e_{k}}^2 +  \lambda \norm{ e_k}^2 }. \label{CDAP6}
    \eal

Next, we consider differences in the Step 2 (CDA-Newton) iterates at step $k$ and $k+1$.  Subtracting \eqref{CDANewton} at step $k$ from \eqref{CDANewton} at step $k+1$ gives, for all $\chi\in V$,
\begin{multline}
\hat\nu (\nabla e_{k+1},\nabla \chi) 
+ \mu (I_H e_{k+1},I_H \chi)
+ \left( b(\tilde v_{k+1},v_{k+1},\chi) - b(\tilde v_k,v_k,\chi) \right) \\
+ \left( b(v_{k+1},\tilde v_{k+1},\chi) - b(v_k,\tilde v_k,\chi) \right)
- \left( b(\tilde v_{k+1},\tilde v_{k+1},\chi) - b(\tilde v_k,\tilde v_k,\chi) \right) =0. \label{CDAN1}
\end{multline}
Rewriting the $b$ term differences, we get for all $\chi\in V$ that
\begin{multline}
\hat\nu (\nabla e_{k+1},\nabla \chi) 
+ \mu (I_H e_{k+1},I_H \chi)
+ \left( b(\tilde v_{k+1},e_{k+1},\chi) + b(\tilde e_{k+1},v_k,\chi) \right) \\
+ \left( b(e_{k+1},\tilde v_{k+1},\chi) + b(v_k,\tilde e_{k+1},\chi) \right)
- \left( b(\tilde e_{k+1},\tilde v_{k},\chi) + b(\tilde v_{k+1},\tilde e_{k+1},\chi) \right) =0. \label{CDAN2}
\end{multline}
Choosing $\chi=e_{k+1}$ (which eliminates the first $b$ term), combining the second and fifth $b$ terms and then the fourth and sixth, and finally using \eqref{lowerboundlam} produces
\begin{multline}
\frac{\hat\nu}{2}\| \nabla e_{k+1} \|^2 
+ \lambda \| e_{k+1}\|^2
\le
| b(\tilde e_{k+1},\tilde v_{k} - v_k,e_{k+1})|  
+ | b(e_{k+1},\tilde v_{k+1},e_{k+1})| \\
+ | b(\tilde v_{k+1} - v_k,\tilde e_{k+1},e_{k+1})|. \label{CDAN3}
\end{multline}
We bound the three $b$ terms on the right hand side of \eqref{CDAN3} as follows.  For the first one, we use Lemma \ref{TRIL}, Young's inequality and \eqref{CDAP6} to get that
\begin{align}
| b(\tilde e_{k+1},\tilde v_{k} - v_k,e_{k+1})|
& \le M \| \nabla \tilde e_{k+1} \| \| \nabla (\tilde v_{k+1} - v_k) \| \| \nabla e_{k+1} \| \nonumber \\
& \le \frac{\hat\nu}{8} \| \nabla e_{k+1} \|^2 + 2 M^2  \| \nabla \tilde e_{k+1} \|^2 \| \nabla (\tilde v_{k} - v_k) \|^2 \nonumber \\
& \le \frac{\hat\nu}{8} \| \nabla e_{k+1} \|^2 + 16 M^2 C_I H \alpha^2 \hat\nu^{-2} \| e_k \|_*^2   \| \nabla (\tilde v_{k} - v_k) \|^2.\label{CDAN4a}
\end{align}
For the second $b$ term in \eqref{CDAN3}, we use use Lemma \ref{TRIL} and Young's inequality to obtain
\begin{align}
| b(e_{k+1},\tilde v_{k+1},e_{k+1})|
& \le M \| \nabla e_{k+1} \| \| \nabla \tilde v_{k+1}\| \| e_{k+1}) \|^{1/2} \| \nabla e_{k+1} \|^{1/2} \nonumber \\
& \le \alpha \hat\nu \| \nabla e_{k+1} \| \left( \|\nabla e_{k+1}\|^{1/2} \| e_{k+1} \|^{1/2} \right) \nonumber \\
& \le \frac{\hat \nu}{8} \| \nabla e_{k+1} \|^2 + 2\alpha^2 \hat \nu \|\nabla e_{k+1}\| \| e_{k+1} \| \nonumber \\
& \le \frac{\hat \nu}{8} \| \nabla e_{k+1} \|^2 + \frac{\lambda}{2} \| e_{k+1} \|^2 + 4\alpha^4 \lambda^{-1} \hat \nu \|\nabla e_{k+1}\|^2. \label{CDAN4b}
\end{align}
Finally for the third $b$ term we again use Lemma \ref{TRIL} and Young's inequality which results in
\begin{align}
| b(\tilde v_{k+1} - v_k,\tilde e_{k+1},e_{k+1})|
& \le M \| \nabla (\tilde v_{k+1} - v_k) \| \| \nabla \tilde e_{k+1} \|  \| \nabla e_{k+1} \| \nonumber \\
& \le \frac{\hat\nu}{8} \| \nabla e_{k+1} \|^2 + 2\hat\nu^{-1} M^2 \| \nabla (\tilde v_{k+1} - v_k) \|^2 \| \nabla \tilde e_{k+1} \|^2 \nonumber \\
& \le \frac{\hat\nu}{8} \| \nabla e_{k+1} \|^2 + 16 \hat\nu^{-2} M^2 C_I H \alpha^2 \| \nabla (\tilde v_{k+1} - v_k) \|^2 \| e_k \|_*^2. \label{CDAN4c}
\end{align}
Combining \eqref{CDAN3} with bounds \eqref{CDAN4a}-\eqref{CDAN4c}, we get that 
\begin{multline}
\left( \frac{\hat\nu}{8} - 8\alpha^4 C_I^2 H^2 \right) \| \nabla e_{k+1} \|^2 
+ \frac{\lambda}{2} \| e_{k+1}\|^2
\le \\
16 M^2 C_I H \alpha^2 \hat\nu^{-2} \| e_k \|_*^2 \left(   \| \nabla (\tilde v_{k} - v_k) \|^2 +
\| \nabla (\tilde v_{k+1} - v_k) \|^2 \right). \label{CDAN5}
\end{multline}
Using the assumption on $H$ and applying Lemma \ref{Lemma5} 
now gives the estimate
\begin{align}
 \frac{\nu}{2} \| \nabla e_{k+1} \|^2 
 + \frac{\lambda}{2} \| e_{k+1}\|^2 
 \le 16 M^2 C_I H \alpha^2 \hat\nu^{-2} \| e_k \|_*^2 \left(   4\alpha^2 \| \nabla e_k \|^2 +
\| \nabla (\tilde v_{k+1} - v_k) \|^2 \right). \label{CDAN6}
\end{align}

It remains to bound $\| \nabla (\tilde v_{k+1} - v_k) \|^2$.  To do this, we subtract \eqref{CDAPicard} from \eqref{CDANewton} with index k, which gives for all $\chi\in V$,
   \al
        \hat\nu(\Grad (\tv_{k+1} - v_k ) , \Grad \chi)  +\mu (I_H( \tv_{k+1} - v_k) , I_H(\chi))
        +b(v_k,\tv_{k+1} - \tv_k,\chi)  + b(\tv_k,\tv_k - v_k,\chi)
        = 0.     \nonumber 
    \eal
Choosing $ \chi = \tv_{k+1} - v_k$ yields, by adding and subtracting $\tilde v_k$ in the first argument of the first $b$ term and reducing, 
   \al
        \hat\nu\norm{\Grad (\tv_{k+1} - v_k)}^2  &+\mu \norm{(I_H( \tv_{k+1} - v_k)}^2 \nonumber \\
        & =
        -b(v_k,\te_{k+1},\tv_{k+1} - v_k) +b(\tv_k,v_k -\tv_k,\tv_{k+1} - v_k)  \nonumber \\
        &= b(\tv_k - v_k, \te_{k+1}, \tv_{k+1} - v_k) + b(\tv_{k},v_k -\tv_k - \te_{k+1} , \tv_{k+1} - v_k ) \nonumber \\
        &= -b(v_k -\tv_k, \te_{k+1},\tv_{k+1} - v_k)- b(\tv_k,\tv_{k+1} - v_k,\tv_{k+1} - v_k).
    \eal
The second term on the right-hand side vanishes, and then applying \eqref{lowerboundlam} to the left hand side and \eqref{eq: bbound1}
to the right hand sides yields the estimate
   \al
        \frac{\hat\nu}{2}\norm{\Grad (\tv_{k+1} - v_k)}^2  +\lambda \norm{ \tv_{k+1} - v_k}^2 
        &\leq \norm{\Grad (v_k -\tv_k)} \norm{\Grad \te_{k+1}}\norm{\Grad (\tv_{k+1} - v_k)}. \label{eq: eP3}
    \eal
Applying Young's inequality to the right hand sides gives
   \al
        \frac{\hat\nu}{2}\norm{\Grad (\tv_{k+1} - v_k)}^2  +\lambda \norm{ \tv_{k+1} - v_k}^2 
        &\leq \hat\nu\inv \norm{\Grad (v_k -\tv_k)}^2 \norm{\Grad \te_{k+1}}^2 + \frac{\hat\nu}{4}\norm{\Grad (\tv_{k+1} - v_k)}^2. \label{eq: eP4}
    \eal   
Combining like terms and using the bound from Lemma \ref{Lemma5}, and the error bound from \eqref{CDAP6} results in
   \al
        \frac{\hat\nu}{4}\norm{\Grad (\tv_{k+1} - v_k)}^2  +\lambda \norm{ \tv_{k+1} - v_k}^2 
        &\leq 4\alpha^2\norm{\Grad e_k}^2 ( 8C_I H\hat\nu\inv \alpha^2 \norm{e_k}^2_* )\nonumber \\
        &= 32C_I H \hat\nu\inv \alpha^4 \norm{e_k}^4_*, \label{eq: eP5}
    \eal
which implies
\[
\norm{\Grad (\tv_{k+1} - v_k)}^2 \le 128 C_I H \hat\nu^{-2} \alpha^4 \norm{e_k}^4_*.
\]
Using this in \eqref{CDAN6} now provides
\begin{align}
\| \nabla e_{k+1} \|^2_* 
 \le 32 M^2 C_I H \alpha^2 \hat\nu^{-2} \| e_k \|_*^2 \left(   4\alpha^2 \| \nabla e_k \|^2 +
128 C_I H \hat\nu^{-2} \alpha^4 \norm{e_k}^4_* \right) \nonumber \\
\le 512 M^2 C_I H \alpha^4 \hat\nu^{-3} \| e_k \|_*^4 
+ 4096 M^2 C_I^2 H^2 \alpha^6 \hat\nu^{-4} \| e_k \|_*^6.
\label{CDAN7}
\end{align}
This completes the proof.
\end{proof}

\section{Modified Newton parameter recovery algorithm} \label{sec:paramrec}

In this section we derive the modified Newton parameter recovery algorithm to find the viscosity $\nu$ from given partial solution data $I_H u$, where $u$ is the NSE velocity solution satisfying \eqref{eq: weak_NSE3}.  We begin by assuming that for given partial solution data $I_H u$, solutions $v=v(\hat\nu)\in V$ to \eqref{cdanse} depend smoothly on the viscosity $\hat\nu$.  Noting that $u=v(\nu)$, we use Taylor series to obtain the relation
\[
v(\hat \nu) = v(\nu) + (\nu - \hat \nu) \left( \begin{array}{c} v_1'(c_1) \\ v_2'(c_2) \\ v_3'(c_3) \end{array} \right) =  v(\nu) + (\nu - \hat \nu)w = u + (\nu - \hat \nu)w,
\]
where $c_1,c_2,c_3$ are between $\nu$ and $\hat\nu$, $v_i$ are the components of $v$, and we have denoted $w:=\left( \begin{array}{c} v_1'(c_1) \\ v_2'(c_2) \\ v_3'(c_3) \end{array} \right).$

Subtracting the CDA-NSE \eqref{cdanse} for arbitrary fixed $\hat\nu$ that is sufficiently close to $\nu$, from the NSE \eqref{eq: weak_NSE3}, gives us 
\[
\hat \nu (\nabla e,\nabla \chi) + (\nu-\hat \nu)(\nabla u,\nabla \chi) + b(v,e,\chi) + b(e,u,\chi) + \mu(I_H e,I_H \chi) = 0 \ \forall \chi \in V,
\]
where $e = e(\hat \nu) = u - v(\hat\nu)$.  Taylor series then yields
\[
e = (\hat \nu - \nu)w.
\]
Taking $\chi=e$ vanishes the first nonlinear term since $b(v,e,e)=0$, and yields the equation
\[
\hat \nu \| \nabla e \|^2 + (\nu - \hat\nu)(\nabla u,\nabla e)  + b(e,u,e) + \mu \| I_H e \|^2 = 0.
\]
Now substituting for each $e$ except in the last term gives the equation
\[
\hat \nu (\hat\nu - \nu)^2 \| \nabla w \|^2 - (\nu - \hat\nu)^2(\nabla u,\nabla w)  + (\hat\nu - \nu)^2 b(w,u,w) + \mu \| I_H e \|^2 = 0.
\]
Thus we can write
\[
 \| I_H e \|^2 = C (\hat\nu - \nu)^2,
\]
where $C=\mu^{-1} \left( \hat \nu  \| \nabla w \|^2 - (\nabla u,\nabla w) + b(w,u,w)\right).$   While we cannot rule out the possibility of $C=0$, we believe this can happen only in isolated (and perhaps diabolical) cases such as when $w=0$; there were no issues in any of our numerical tests.  We have thus established that, for $C\ne 0$, 
\begin{equation}
E(\hat\nu) := \frac12 \| I_H e(\hat\nu) \|^2 = C (\hat\nu - \nu)^2, \label{E1}
\end{equation}
which implies that near $\nu$, $E$ is a parabola with a single root at the vertex $(\nu,0)$.  

While the relation \eqref{E1} alone is useful and can already be exploited to create a method for finding $\nu$, a significant improvement results from determining $E'(\hat\nu)$.  By implicitly differentiating \eqref{cdanse} with respect to $\hat\nu$, we obtain for all $\chi\in V$ that
\[
 (\nabla v,\nabla\chi) + \hat\nu\left(\nabla \frac{\partial v}{\partial \hat\nu},\nabla\chi \right) + b^*\left(\frac{\partial v}{\partial \hat\nu},v,\chi\right) + b^*\left(v,\frac{\partial v}{\partial \hat\nu},\chi\right) + \mu \left(I_H \frac{\partial v}{\partial \hat\nu},I_H\chi\right) = 0.
 \]
Since $v$ is considered known, we have the following {\it linear} system to determine $\frac{\partial v}{\partial \hat\nu}\in V$: for all $\chi\in V$,
\begin{equation}
 \hat\nu\left(\nabla \frac{\partial v}{\partial \hat\nu},\nabla\chi \right) + b^*\left(\frac{\partial v}{\partial \hat\nu},v,\chi\right) + b^*\left(v,\frac{\partial v}{\partial \hat\nu},\chi\right) + \mu \left(I_H \frac{\partial v}{\partial \hat\nu},I_H\chi\right) = -(\nabla v,\nabla\chi) . \label{dv}
\end{equation}
Hence for a given $v=v(\hat\nu)\in V$, just one linear solve similar to a CDA-Newton solve determines $\frac{\partial v}{\partial \hat\nu}\in V$.  Hence we can now calculate the derivative
\[
\frac{\partial E(\hat\nu)}{\partial \hat\nu} = \left(I_H \frac{\partial v}{\partial \hat\nu},I_H e\right),
\]
for which we have already computed the right hand side, and thus we obtain $\frac{\partial E(\hat\nu)}{\partial \hat\nu}$ at the cost of a single linear solve.

We now have all the pieces to devise a modified Newton root-finding algorithm.  Since $E$ is parabolic and $\frac{\partial E(\hat\nu)}{\partial \hat\nu}(\nu)=0$, this implies the order of the root is two, and so the appropriate modified Newton method for quadratic convergence is to guess $\nu_0$ and then iterate:
\begin{equation}
\nu_{k+1} = \nu_k - \frac{ 2E(\nu_k) }{ \frac{\partial E(\nu)}{\partial \hat\nu}(\nu_k)}. \label{newt1}
\end{equation}
One step of this modified Newton algorithm requires calculating $E(\nu_k)$ and $\frac{\partial E(\hat\nu)}{\partial \hat\nu}(\nu_k)$, which requires knowing $v(\nu_k)$ and $\frac{\partial v}{\partial \hat\nu}(\nu_k)$.
Determining $v(\nu_k)$ requires a nonlinear solve of \eqref{cdanse}, for which we propose using CDA-Picard+CDA-Newton of the previous section (our numerical tests show that this takes only a few iterations to converge on average).  Determining $\frac{\partial v}{\partial \hat\nu}(\nu_k)$ requires one linear solve of \eqref{dv}, which not coincidentally is very similar in form to a CDA-Newton solve.  
We have thus derived the following algorithm for determining an unknown viscosity $\nu$ from partial NSE solution data $I_Hu$.

\begin{algorithm}[Modified Newton for recovering unknown viscosity from solution data]\label{modnewt}
Given partial solution data $I_Hu$, tolerance $tol$ and maximum number of iterations $maxit$,  do:
\begin{enumerate}
\item Guess $\nu_0$ and $v_0\in V$.
\item For $i=1:maxit$
\begin{itemize}
\item Solve the CDA-NSE nonlinear system  \eqref{cdanse} using $\hat\nu=\nu_{i-1}$ to determine the velocity solution $v_i=v.$ 
\begin{itemize}
\item Use CDA-Picard+CDA-Newton proposed in Section 4 as the nonlinear solver, with $v_{i-1}$ as the initial guess.
\end{itemize}
\item Solve the linear system \eqref{dv} using $\hat\nu=\nu_{i-1}$ and $v=v_i$ to determine $\omega_i=\frac{\partial v}{\partial \hat\nu}$.
\item Update the viscosity as
\[
\nu_i = \nu_{i-1} - \frac{ \| I_H \left(v_i - u\right) \|^2 }{\left(I_H \omega_i, I_H (v_i - u)\right) }.
\]
\item If $|\nu_i - \nu_{i-1}|<tol$, break.
\end{itemize}

\end{enumerate}

\end{algorithm}

\section{Numerical Results} \label{sec: Numres}

We now demonstrate the effectiveness of the modified Newton viscosity recovery algorithm and the CDA-Picard+CDA-Newton nonlinear solver.  We use three benchmark tests: 2D driven cavity, 3D driven cavity, and 2D channel flow through a sudden expansion.  All tests reveal the fast and robust convergence of the new methods.  We also consider the case of noisy data, even though this is beyond the theory developed here.  We find that the modified Newton viscosity recovery algorithm still quickly converges, but only up to a noise-dependent range near the solution.

All of our tests use divergence-free $(P_k^d,P_{k-1}^{disc})$ Scott-Vogelius (SV) finite elements with $k=d$, which we denote by $(X_h,Q_h)\subset (H^1(\Omega),L^2(\Omega))$. The meshes used are barycenter refined triangular/tetrahedral meshes, where these elements are known to be stable \cite{JLMNR17,arnold:qin:scott:vogelius:2D,Z05}.  Using SV elements provides strong mass conservation, optimal accuracy, and a pressure robust discretization \cite{JLMNR17,CELR11}.  

The discretely divergence-free subspace is defined by
\[
V_h := \{  v\in X_h,\ (\nabla \cdot v,q)=0\ \forall q\in Q_h \},
\] 
and since for these elements it holds that $\nabla \cdot X_h\subset Q_h$, we have that 
\[
V_h := \{  v\in X_h,\ \| \nabla \cdot v_h \|=0 \}.
\] 
Hence $V_h\subset V$, and all the analysis in this paper (which is performed in $V$) extends trivially to the discrete setting using $V_h$.  Note that for our tests, true solutions come from direct numerical simulations and using continuation methods when needed.  That is, we assume that true solutions satisfy: $u \in V_h$ solves
\al
\nu \pare{ \Grad u, \Grad \chi } + b(u,u,\chi)  &= \langle f,\chi \rangle.\  \forall \chi \in V_h. \label{eq: disc_NSE3} 
\eal
It is known that for these solutions and element choices, $\| u - u_{nse} \|_{L^2}=O(h^{d+1})$.

Each iteration of the modified Newton viscosity recovery algorithm, Algorithm \ref{modnewt}, requires solving CDA-NSE using the CDA-Picard+CDA-Newton.  For both CDA-Picard and CDA-Newton, we use
Augmented Lagrangian type preconditioning of GMRES following \cite{benzi,HR13,CLLRW13,BB12} to perform the linear solves, with the augmentation created from grad-div stabilization. The inner solves use direct solvers.  This linear solver approach is found to be very effective in all of our tests, and the number of GMRES iterations is typically less than 10.

\subsection{Performance of the modified Newton viscosity recovery algorithm}

We consider three tests to illustrate the effectiveness of the proposed modified Newton viscosity recovery algorithm, i.e. Algorithm \ref{modnewt}.  For these tests, true solutions are assumed to satisfy \eqref{eq: disc_NSE3}.  There are two tolerances involved in the accuracy of such solutions: the nonlinear solver tolerance and the linear solver tolerance.  We use Augmented Lagrangian preconditioned GMRES as a linear solver as in \cite{benzi,HR13,CLLRW13,BB12}, with tolerance $10^{-10}$.  For nonlinear solvers, we used an $H^1$ nonlinear solver tolerance of $10^{-8}$ in 2D and $10^{-6}$ in 3D.  Note the nonlinear solver tolerance is limited by the linear solver tolerance, and also the dimension $d$ and Reynolds number since they affect conditioning of the matrices.

\subsubsection{2D driven cavity}

\begin{figure}[h!]
\center
$Re=5,000$ \hspace{1in}  $Re=10,000$ \\
\includegraphics[width = .3\textwidth, height=.28\textwidth,viewport=115 45 465 390, clip]{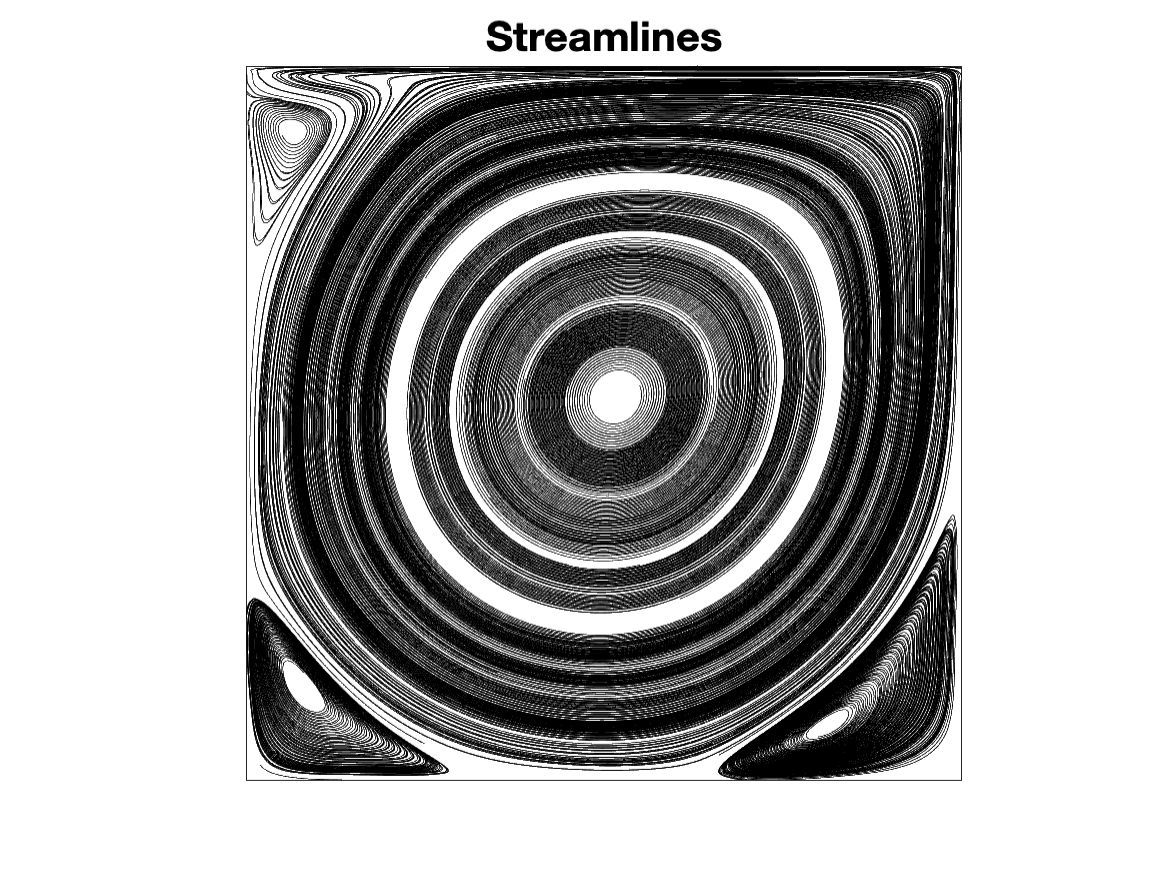}  
\includegraphics[width = .3\textwidth, height=.28\textwidth,viewport=115 45 465 390, clip]{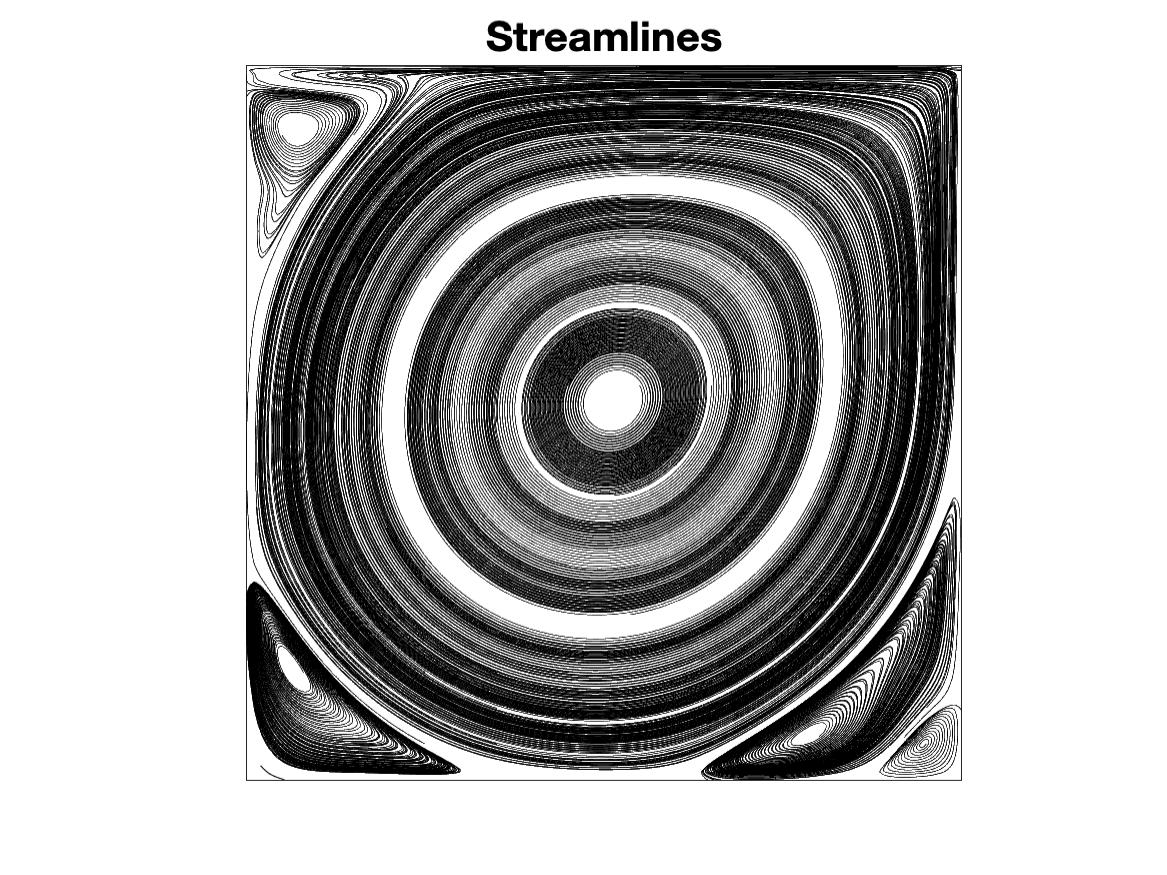}  
\caption{\label{dc2d} The plots above show 2D driven cavity velocity solution streamlines for varying $Re$.}
\end{figure}

Our first test for Algorithm \ref{modnewt} is the 2D driven cavity benchmark test for incompressible NSE flow \cite{ghia:cavity,bruneau:cavity,ECG05}.  The domain is $\Omega=(0,1)^2$, there is no forcing $f=0$, homogeneous Dirichlet velocity boundary conditions are prescribed on the sides and bottom (the walls), and $[ 1,0 ]^T$ is prescribed on the top (the lid).  The Reynolds number for this problem is defined to be the inverse of the kinematic viscosity, $Re:=\nu^{-1}$, and we choose for our tests $Re$=5000 and 10000.  Our discretization provides 680K total  degrees of freedom (dof), and true solutions for these $Re$ are shown in Figure \ref{dc2d} and are in agreement with solutions in the literature \cite{ECG05}.  CDA parameters are chosen to be $\mu=1$ and $H=\frac{1}{16}$; thus we use true solution data at just 256 nodes (i.e. only 512 total dof out of the total 680K dof).  The tolerance for Algorithm \ref{modnewt} is set as $10^{-7}$.

\begin{table}[h!]
\centering
\begin{tabular}{|c||c|c||c|c|}
	\hline
Iteration &	$\nu_k$ &   \# CDA-P + CDA-N iters &	$\nu_k$ &   \# CDA-P + CDA-N iters  \\ \hline
0 (initial guess) & 3.33333e-4 & - & 1.00000e-4 & - \\ \hline
1 & 1.90021e-4 &  6 & 1.99750e-4 & 6 \\ \hline
2 & 1.99936e-4  & 4& 1.99999e-4   & 4 \\ \hline
3 & 2.00000e-4 &  3& 2.00002e-4 &  3 \\ \hline
\end{tabular}
\caption{\label{convNewton2D} Shown above are convergence results for Algorithm \ref{modnewt} and for CDA-Picard+CDA-Newton for the 2D cavity, using true $\nu=\frac{1}{5000}$ ($Re$=5000) and initial guesses of $\frac{1}{3000}$ and $\frac{1}{10000}$.}
\end{table}

We first test Algorithm \ref{modnewt} for Reynolds number $Re$=5000, i.e. kinematic viscosity $\nu=\frac{1}{5000}$, with varying initial guesses for the viscosity.  Results are shown in Table \ref{convNewton2D} for both the convergence of Algorithm \ref{modnewt} and of CDA-Picard+CDA-Newton.  We observe rapid convergence of the modified Newton viscosity recovery algorithm, for both initial guesses of $\nu_0=\frac{1}{3000}$ and $\frac{1}{10000}$ convergence is obtained in just 3 iterations.
CDA-Picard+CDA-Newton is also observed to perform very well: for both runs, it needed 6, 4, and 3 iterations to converge the CDA-NSE system.

We also test Algorithm \ref{modnewt} for Reynolds number $Re$=10000, i.e. kinematic viscosity $\nu=\frac{1}{10000}$, with varying initial guesses for the viscosity.  This is a harder problem, and although the literature is not settled on whether this is past the first Hopf birfurcation \cite{bruneau:cavity,CHOR17}, steady solutions still exist at this $Re$ even if time dependent ones do not converge to those steady solutions \cite{G94,Layton08}.  For initial guesses $\nu_0=\frac{1}{4000}$ and $\frac{1}{13000}$ we observe rapid convergence of Algorithm \ref{modnewt} to the true viscosity, needing just 4 and 3 iterations to converge, respectively.  We also observe fast convergence of the CDA-Picard+CDA-Newton nonlinear solver.  The most iterations CDA-Picard+CDA-Newton takes is 7 iterations when $\nu_{i-1}=\frac{1}{13000}$ and $v_0=0$, but even this is very impressive convergence since stabilized Newton does not converge for the steady NSE when $\nu\le \frac{1}{3500}$. \cite{PRTX25}.

\begin{table}[h!]
\centering
\begin{tabular}{|c||c|c||c|c|}
	\hline
Iteration &	$\nu_k$ &   \# CDA-P + CDA-N iters &	$\nu_k$ &   \# CDA-P + CDA-N iters  \\ \hline
0 (initial guess) & 2.50000e-4  & - & 7.69231e-5 & - \\ \hline
1 & 7.70815e-5 &  5 & 9.95502e-5 & 7 \\ \hline
2 & 9.95587e-5  & 5& 9.99999e-5  & 4 \\ \hline
3 & 9.99999e-5  &  4& 1.000006-4 &  3 \\ \hline
4 & 1.00001e-4 &  3&   &    \\ \hline
\end{tabular}
\caption{\label{convNewton2Db} Shown above are convergence results for Algorithm \ref{modnewt} and for CDA-Picard+CDA-Newton for the 2D cavity, using true $\nu=\frac{1}{10000}$ ($Re$=10000) and initial guesses $\frac{1}{4000}$ and $\frac{1}{13000}$.}
\end{table}

\subsection{3D driven cavity}

\begin{figure}[h!]
\centering
$Re$=1000\\
\includegraphics[width=0.9\textwidth]{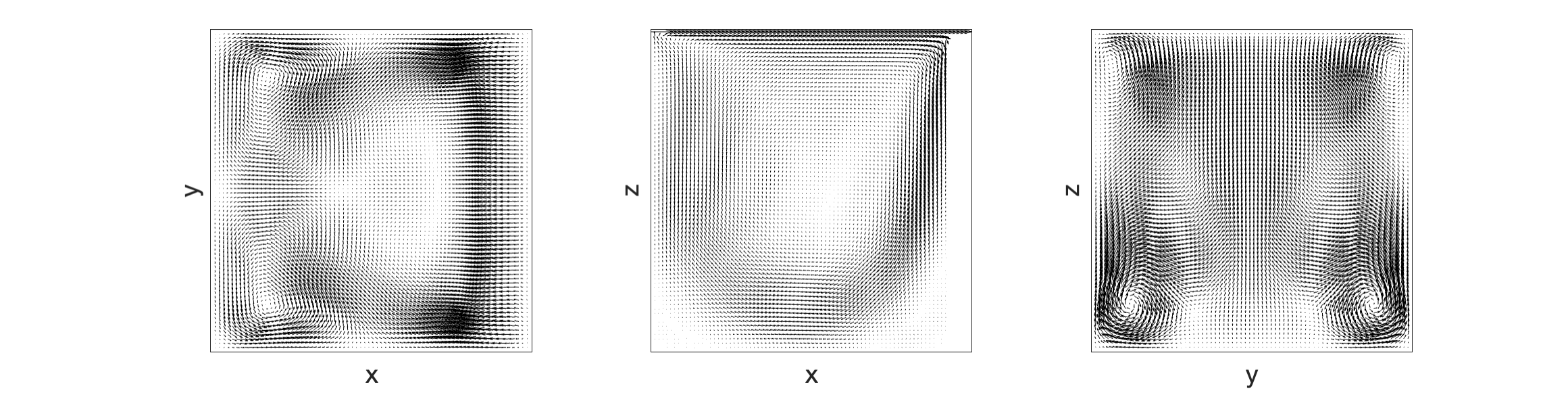}\\
\caption{\label{plots3d}Shown above are midsliceplanes for the $Re$=1000 velocity solution found for the 3D driven cavity.}
\end{figure}

\begin{table}[h!]
\centering
\begin{tabular}{|c||c|c||c|c|}
	\hline
Iteration &	$\nu_k$ &   \# CDA-P + CDA-N iters &	$\nu_k$ &   \# CDA-P + CDA-N iters  \\ \hline
0 (initial guess) & 1.000000e-2 & - & 2.500000e-3 & - \\ \hline
1 & 5.030392e-3 &  4 & 5.068201e-3 & 4 \\ \hline
2 & 5.000001e-3  & 3& 5.000012e-3  & 3 \\ \hline
3 & 4.999998e-3 &  3& 4.999999e-3 &  2 \\ \hline
\end{tabular}
\caption{\label{convNewton2Dc}  Shown above are convergence results for Algorithm \ref{modnewt} and for CDA-Picard+CDA-Newton for the 3D cavity, using true $\nu=\frac{1}{200}$ ($Re$=200) and initial guesses of $\frac{1}{100}$ and $\frac{1}{400}$.}
\end{table}

\begin{table}[h!]
\centering
\begin{tabular}{|c||c|c||c|c|}
	\hline
Iteration &	$\nu_k$ &   \# CDA-P + CDA-N iters &	$\nu_k$ &   \# CDA-P + CDA-N iters  \\ \hline
0 (initial guess) & 2.500000e-3 & - & 6.666667e-4 & - \\ \hline
1 & 0.999359e-3 &  4 & 1.004666e-3 & 5 \\ \hline
2 & 1.000000e-3  & 4& 1.000000e-3  & 4 \\ \hline
3 & 1.000000e-3 &  2& 1.000000e-3 &  3 \\ \hline
\end{tabular}
\caption{\label{convNewton2Dc2}  Shown above are convergence results for Algorithm \ref{modnewt} and for CDA-Picard+CDA-Newton for the 3D cavity, using true $\nu=\frac{1}{1000}$ ($Re$=1000) and initial guesses of $\frac{1}{400}$ and $\frac{1}{1500}$. }
\end{table}

Our second test problem is the 3D driven cavity, which is an analogue to the 2D test problem above.  Here, 
$\Omega=(0,1)^3$, $f=0$, and no slip velocity boundary conditions are strongly enforced on the walls and 
 $[1, 0, 0]^T$ on the `lid'.  Our tests use true solutions for $Re$=200 and 1000, and
 a plot of the $Re$=1000 solution is shown in Figure \ref{plots3d} that agrees well with the literature \cite{WB02}.  Our discretization provided for 796K total dof, and for CDA we used $\mu=1$ for all tests and $H=\frac{1}{10}$ for $Re$=200 and $H=\frac{1}{16}$ for $Re$=1000.
   
Results of Algorithm \ref{modnewt} are shown in Table \ref{convNewton2Dc} for $Re$=200 and Table \ref{convNewton2Dc2} for $Re$=1000.  In all cases, the CDA-Picard+CDA-Newton solver performed very well, yielding rapid and reliable convergence.  Algorithm \ref{modnewt} performed very well for $Re$=200 for both initial guesses of $\frac{1}{100}$ and $\frac{1}{400}$, converging in just 3 iterations to the true viscosity.  

\subsubsection{2D channel flow through an expansion}

\begin{figure}[h!]
\begin{center}
\includegraphics[width = .8\textwidth, height=.16\textwidth,viewport=170 20 1300 250, clip]{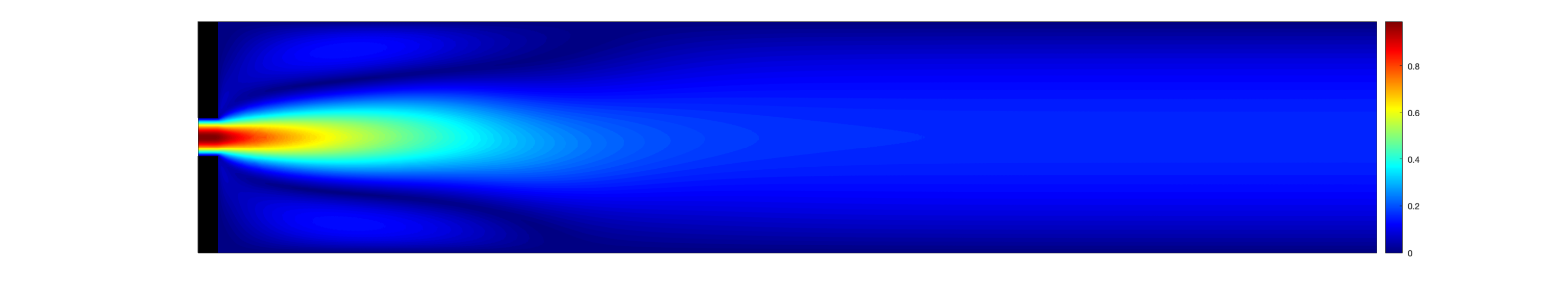}\\
\includegraphics[width = .8\textwidth, height=.16\textwidth,viewport=170 20 1300 250, clip]{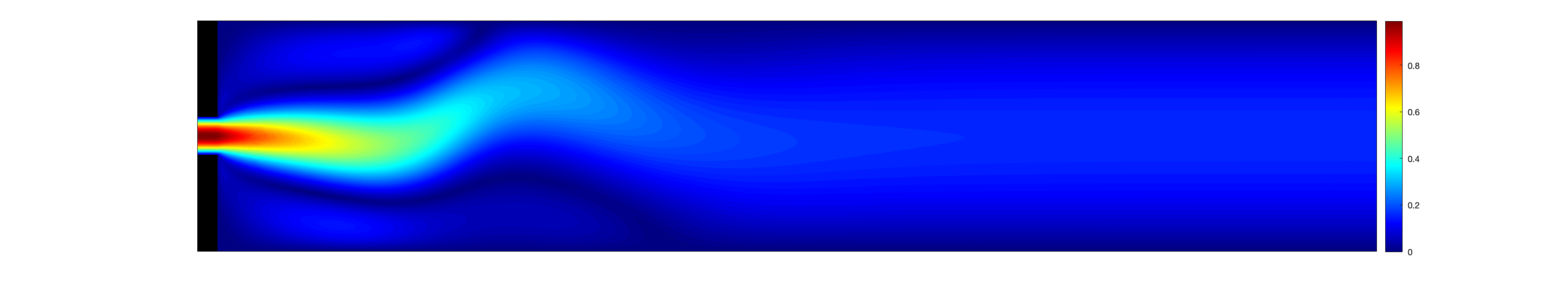}\\
\includegraphics[width = .8\textwidth, height=.16\textwidth,viewport=170 20 1300 250, clip]{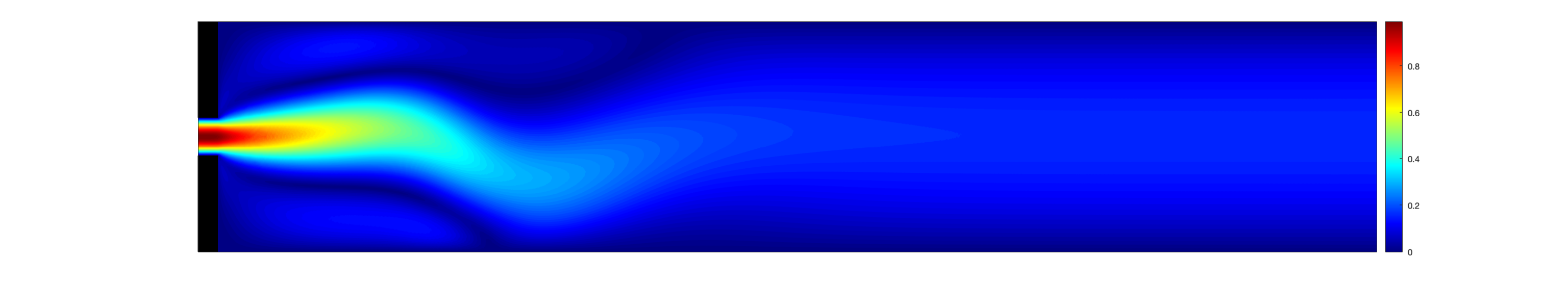}\\
\includegraphics[width = .8\textwidth, height=.16\textwidth,viewport=170 20 1300 250, clip]{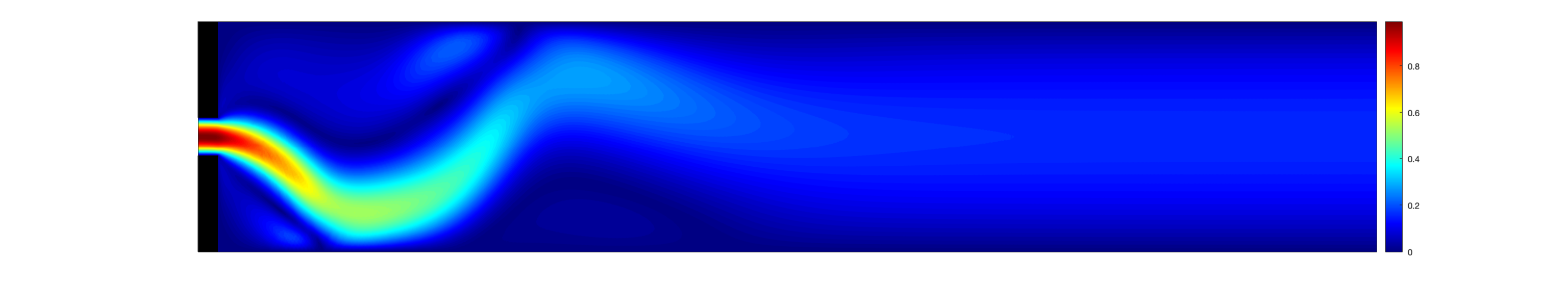}\\
\includegraphics[width = .8\textwidth, height=.16\textwidth,viewport=170 20 1300 250, clip]{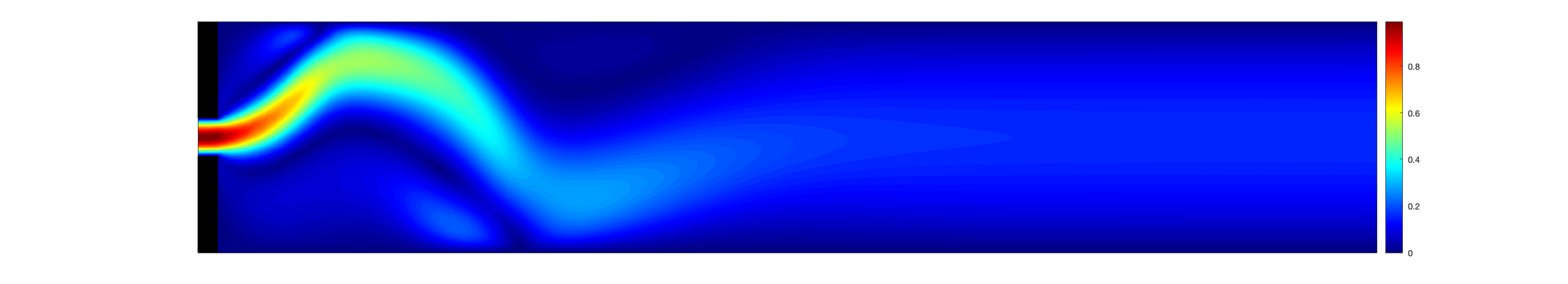}
\caption{\label{N1}  Shown above are the 5 solutions found by the NGMRES deflation method for the NSE channel expansion problem. }
\end{center}
\end{figure}

\begin{figure}[h!]
\begin{center}
\includegraphics[width = .8\textwidth, height=.16\textwidth,viewport=170 20 1450 280, clip]{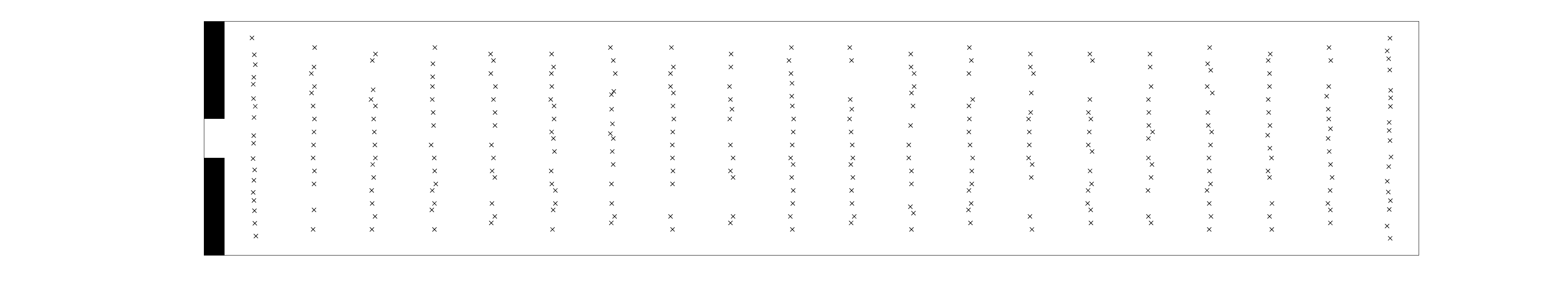}\caption{\label{IHnodes}  Shown above are the locations where the true solution is known for the 2D channel flow with expansion. }
\end{center}
\end{figure}

\begin{table}[h!]
\centering
\begin{tabular}{|c||c|c||c|c|}
	\hline 
	\multicolumn{5}{|c|}{Recovery of $\nu$ using Solution 1}\\ \hline
Iteration &	$\nu_k$ &   CDA-P + CDA-N iters  & $\nu_k$ & CDA-P + CDA-N iters \\ \hline
0 (initial guess) & 0.050000000   & - & 0.010000000 & -  \\ \hline
1 & 0.011693389 &  4 & 0.019351005 &6 \\ \hline
2 & 0.019519756 &  5 & 0.019997315&4\\ \hline
3 & 0.019998669 &  4 & 0.020000004&3\\ \hline
4 & 0.020000001 &  3 & 0.020000000&2\\ \hline
5 & 0.020000000 &  2 &  &  \\ \hline
\end{tabular} \\ \ \\
\begin{tabular}{|c||c|c||c|c|}
	\hline 
	\multicolumn{5}{|c|}{Recovery of $\nu$ using Solution 2}\\ \hline
Iteration &	$\nu_k$ &   CDA-P + CDA-N iters  & $\nu_k$ & CDA-P + CDA-N iters \\ \hline
0 (initial guess) & 0.050000000   & - & 0.010000000 & -  \\ \hline
1 & 0.012267192 &  4 & 0.018733355 &7 \\ \hline
2 & 0.019266767 &  5 & 0.019980492&4\\ \hline
3 & 0.019931677 &  4 & 0.019999975&3\\ \hline
4 & 0.019999993 &  3 & 0.020000000&2\\ \hline
5 & 0.020000000 &  2 &  &  \\ \hline
\end{tabular} 
\caption{\label{convNewtonexp} Shown above are convergence results for Algorithm \ref{modnewt} and for CDA-Picard+CDA-Newton for the 2D channel expansion with two distinct true solutions., using true $\nu=\frac{1}{50}$ ($Re$=50) and initial guesses of $\frac{1}{20}$ and $\frac{1}{100}$.}
\end{table}

For our third test problem, we consider a channel expansion problem studied by Farrell et al \cite{FBF15}, which is known to admit multiple solutions for sufficiently large $Re$.  The domain is $\Omega=(0,2.5]\times (-1,1) \cup (2.5,150)\times(-6,6),$ which represents a channel of width 2 entering into a channel of width 12.  The Reynolds number is taken to be $Re = \nu^{-1}$ for this problem, parabolic inflow and no-slip on the walls are enforced as Dirichlet boundary conditions for velocity, and zero-traction is weakly enforced at the outflow.  We choose $Re$=50, and for this $Re$ it is known that the NSE admits 5 distinct solutions \cite{FBF15}, which are shown in Figure \ref{N1}.  We refer to Solution 1 as the top solution in the figure, and Solution 2 as the second solution from top in the figure.  We compute on a barycenter refined Delaunay mesh that yields 97K total dof with SV elements.

We now test Algorithm \ref{modnewt}'s ability to recover the viscosity using partial solution data from this multi-solution test problem.  We first test parameter recovery using Solution 1 and initial guesses $\nu_0=\frac{1}{20}$ and $\frac{1}{100}$, and then repeat the test with Solution 2.  For CDA, we take $\mu=1$ and use 
297 nodes as measurement locations to construct $I_Hu$, see Figure \ref{IHnodes}.  Results are shown in Table \ref{convNewtonexp}.  Convergence for both the modified Newton recovery scheme and CDA-Picard+CDA-Newton is rapid for each true solution and each initial guess.

\subsection{Comparison to ADAM}

\begin{table}[h!]
\centering
\begin{tabular}{|c|c|c|}
\hline
Iteration & $\nu_k$ &   \# CDA-P + CDA-N iters  \\ \hline
0 (initial guess) & 2.50000e-4  & - \\ \hline
1 & 1.90062e-4 &  5 \\ \hline
2 & 1.41042e-4 &  4 \\ \hline
3 & 1.10993e-4 &  4 \\ \hline
4 & 1.01252e-4 &  4 \\ \hline
5 & 9.99956e-5 &  4 \\ \hline
6 & 1.00000e-4 &  3 \\ \hline
\end{tabular}
\caption{\label{adam1} Shown above are convergence results for the ADAM optimizer for the 2D driven cavity problem with true $Re=10000$, $\nu_0=\frac{1}{4000}$, and $\alpha_{adam} = 6e-5$,
$\beta_1 = 0$. }
\end{table}

\begin{table}[h!]
\centering
\begin{tabular}{|c|c|c|c|}
\hline
$\alpha_{adam}$ & $\beta_1$ & ADAM iterations  \\ \hline
1e-5 & 0.5 & 41 \\ \hline
1e-5 & 0.25 & 33 \\ \hline
5e-5 &  0.5  & 31 \\ \hline
 5e-5 & 0.25 & 19 \\ \hline
 5e-5 & 0.1 &  11 \\ \hline
 5e-5 & 0.05 & 11 \\ \hline
 5e-5 & 0 & 9 \\ \hline
6e-5 & 0 & 6 \\ \hline
7e-5 & 0 & $>$100 \\ \hline
\end{tabular}
\caption{\label{adam2} Shown above is the number of ADAM iterations needed for convergence, for varying $\alpha_{adam}$ and $\beta_1$. }
\end{table}

While we propose viscosity recovery through a modified Newton method to find the minimizer/root, it is becoming quite popular to use the ADAM optimizer \cite{kingma2015adam}
for optimization problems.  Hence for comparison, we now repeat the 2D cavity test with true $\nu=\frac{1}{10000}$ and initial guess $\nu_0=\frac{1}{4000}$ but compare against using the computed sensitivity inserted into the ADAM optimizer to identify the $\hat{\nu}$ which minimizes the observable error.

The ADAM optimizer is a generalization of gradient descent which takes advantage of first order gradient information to achieve a more robust descent to the desired minima.  ADAM makes two primary adjustments beyond gradient descent.  The first is to use momentum to escape local minima.  The second adjustment enforces an adaptive learning rate so that coordinates (in multiple dimensional optimization) are weighted unevenly depending on the historic (through iterative time) size of the relative gradient.

Results are shown in Table \ref{adam1} for parameters $\alpha_{adam} = 6e-5$ and
$\beta_1 = 0$, and we observe that ADAM converges in 6 iterations.  Overall the convergence is superlinear, and although it takes 3 more iterations than modified Newton, the convergence is smoother (modified Newton overshoots from $\frac{1}{4000}$ to $\frac{1}{12973}$ in its first iteration) which in some instances may be a useful property.  However, this convergence with ADAM is highly parameter specific.  In Table \ref{adam2}, we show that with small changes to $\alpha_{adam}$ and $\beta_1$, significantly worse convergence is found.

One reason that ADAM may perform worse on this particular problem is that it was developed for, and works best for, stochastic gradient descent in very high dimensions and the current example, while acting on a high dimensional functional, is in reality a one-dimensional optimization/root-finding problem.  In addition, the Newton root-finding approach advocated here is very specific to this specific investigation of identifying the `correct' viscosity.  Such an algorithm is only viable when a `correct' or `true' solution exists.  If there is no `correct' solution such as in the case of an eddy viscosity for under-resolved simulations, a root-finding algorithm will likely perform more poorly than a minimization routine.  The quadratic nature of the loss is also very conducive to a Newton type approach, as Newton's method is typically derived using a locally quadratic assumption.

\subsection{The case of noisy solution data}

\begin{table}[h!]
\centering
\begin{tabular}{|c||c||c||c|}
	\hline
 &	$\gamma=0.01$ & $\gamma=0.001$ & $\gamma=0.0001$ \\ \hline
Iteration &	$\nu_k$ &   $\nu_k$  &  $\nu_k$   \\ \hline
0 &3.33333e-4  & 3.33333e-4  & 3.33333e-4  \\ \hline
1 & 1.90787e-4 & 1.90067e-4     & 1.90036e-4 \\ \hline
2 & 2.04710e-4 & 2.00027e-4      &  1.99947e-4 \\ \hline
3 & 1.90312e-4 & 2.16735e-4    &  2.00069e-4 \\ \hline
4 & 2.04532e-4 & 1.99849e-4    & 1.99947e-4 \\ \hline
5 & 1.89779e-4 & 2.02055e-4    &  2.00069e-4 \\ \hline
6 & 2.04351e-4 & 1.99846e-4     & 1.99948e-4 \\ \hline
7 & 1.89176e-4 & 2.02021e-4    &  2.00070e-4 \\ \hline
8 & 2.04162e-4  & 1.99843e-4  & 1.99948e-4 \\ \hline
9 & 1.88480e-4 & 2.02021e-4     & 2.00069e-4 \\ \hline
10 &  2.03967e-4  & 1.99839e-4   &1.99948e-4    \\ \hline
\hline
range x $10^{-4}$ & [1.884,2.040] & [1.998,2.020] & [1.999,2.001] \\ \hline 
\end{tabular}\\ \ \\
\caption{\label{noisetable} Shown above are convergence results for Algorithm \ref{modnewt}, using a true solution for $Re$=5000 with varying amounts of noise in the partial solution data for CDA.}
\end{table}

Our final numerical test is concerned with recovering viscosity in the setting where the partial solution data is inaccurate due to noise arising from e.g. measurement error or decompression of compressed data used to store or pass the solution.  Hence we now repeat the 2D cavity test above with $Re$=5000, but we add random noise to the partial solution data.  That is, we add uniformly distributed random numbers to the known true velocity solution via
\[
[I_H u^1,I_H u^2](x_j) = [I_H(u^1_{true}),I_H(u^2_{true}) ](x_j) + \gamma * \left( \mbox{unif}(-1,1); \mbox{unif}(-1,1) \right). 
\]

Results are shown in Table \ref{noisetable}, for varying $\gamma$=0.01, 0.001 and 0.0001.  We observe fast convergence of viscosity in all cases, but only up to a range that is linearly dependent on $\gamma$.  The width of the ranges are 1.56e-5 for $\gamma=10^{-2}$, 2.17e-6 for $\gamma=10^{-3}$ and 1.21e-7 for $\gamma=10^{-4}$.  Roughly, we observe that with $\mathcal O(10^{-n})$ magnitude noise, we obtain $n$ digits of accuracy within 4 or 5 iterations of Algorithm \ref{modnewt}.

\section{Conclusions and Future Directions}
\label{sec: Con}

An effective and very efficient method has been constructed, analyzed and tested for recovering an unknown viscosity $\nu$ for the steady NSE when partial solution data is available.  The method is a combination of two new algorithms proposed in this paper: a modified Newton iteration for recoving an unknown viscosity, and the CDA-Picard+CDA-Newton solver that the modified Newton iteration uses at each step.  The modified Newton iteration was constructed based on analysis that found the objective function derivative can be found with one additional linear solve and analysis that showed the objective function has a repeated root at $\nu$.  The CDA-Picard+CDA-Newton method was proven to converge quadratically, and its basin of convergence is proven to grow as more partial solution data is available.

Several directions for future work arise from our results herein.  The first is for noisy data.  While we did not analyze this case, we performed numerical tests that showed the proposed method still appears to work well, but only converges to within a noise dependent range of the true viscosity.  Proving such a result would be both important and useful.  Another important direction is multiple parameter recovery, which could be for the NSE when both the viscosity and forcing are unknown or in multiphysics problems such as Boussinesq or MHD that have multiple physical parameters.

\section{Acknowledgements}
 JR would like to thank and acknowledge Society for Industrial and Applied Mathematics and the SIAM Postdoctoral Support Program for their financial support.
 
\section{Data Availability}

Data will be made available on request.

\section{Declaration of competing interest}

The authors declare the following financial interests / personal relationships which may be considered as competing interests: Author LR reports financial support from the Department of Energy, grant DE-SC0025292. JPW was partially supported by NSF grant DMS-2510495 and CCF-2343286.

\bibliographystyle{abbrv}
\bibliography{cas-refs,references}

\end{document}